\documentclass[11pt]{article}
\usepackage[top=2.54cm, bottom=2.54cm, left=2.5cm, right=2.5cm]{geometry}
\usepackage[tbtags]{amsmath}
\usepackage{amssymb}
\usepackage{amsthm}
\usepackage{appendix}
\usepackage{latexsym}
\usepackage{mathrsfs}
\usepackage{xcolor}
\usepackage{wasysym}
\usepackage{fancyhdr}
\usepackage[colorlinks=true, linkcolor=red, citecolor=blue, urlcolor=magenta]{hyperref}
\usepackage{float}
\usepackage{graphicx}

\usepackage[numbers,sort&compress]{natbib}
\usepackage{pict2e,color,xcolor}
\usepackage{tikz}
\usepackage{tikz-3dplot}
\usepackage{tkz-graph}
\usetikzlibrary{arrows,shapes,chains}
\usepackage{enumerate}
\usepackage{verbatim}

\allowdisplaybreaks[4]

\newcommand{\RR}{\mathbb{R}}
\newcommand{\EE}{\mathbb{E}}
\newcommand{\PP}{\mathbb{P}}

\renewcommand{\phi}{\varphi}
\renewcommand{\epsilon}{\varepsilon}
\newcommand{\N}{\mathcal{N}}

\def\bx{\boldsymbol{x}}
\def\by{\boldsymbol{y}}

\newtheorem{theorem}{Theorem}[section]
\newtheorem{lemma}[theorem]{Lemma}
\newtheorem{proposition}[theorem]{Proposition}

\theoremstyle{definition}
\newtheorem{definition}[theorem]{Definition}

\newtheorem{claim}[theorem]{Claim}

\begin{document}

\title{Sharper Ramsey lower bounds from refined Gaussian estimates}
\author{
Qizhong Lin\footnote{Center for Discrete Mathematics, Fuzhou University,
Fuzhou, 350108 P.~R.~China. Email: {\tt linqizhong@fzu.edu.cn}. Supported in part  by National Key R\&D Program of China (Grant No. 2023YFA1010202) and NSFC (No.\ 12571361).} 
\;\; and\;\;
Lin Niu\footnote{Center for Discrete Mathematics, Fuzhou University, Fuzhou, 350108 P.~R.~China. Email: {\tt 1539166573@qq.com}.} 
}
\date{}

\maketitle

\begin{abstract}
Recently, Ma, Shen and Xie broke the Erd\H{o}s barrier for off-diagonal Ramsey numbers $R(\ell,C\ell)$, achieving the first exponential improvement over the classical lower bound for every $C>1$ and sufficiently large $\ell$. Hunter, Milojevi\'{c}, and Sudakov later gave a simplified proof using Gaussian random graphs and obtained better quantitative bounds. In this paper we prove a further improvement, and show that the exponent in the Ramsey lower bound can be increased by a strictly positive amount for every fixed $C>1$; as $C\to\infty$, the gain is asymptotically $\Theta(p_C^{-1/2}/\log C)$. The improvement is achieved by replacing the subgaussian estimate for truncated Gaussians with a sharp cumulant generating function bound.
\end{abstract}

\section{Introduction}
\label{sec:intro}

For positive integers \(k,\ell\ge 1\), the Ramsey number \(R(\ell,k)\) is the smallest integer \(n\) such that every red/blue coloring of the edges of the complete graph \(K_n\) contains either a red copy of \(K_\ell\) or a blue copy of \(K_k\). The existence of \(R(\ell,k)\) follows from Ramsey \cite{ram}. These numbers lie at the heart of Ramsey theory, a central branch of combinatorics with a long and rich history. For further background, see the surveys by Conlon, Fox, and Sudakov \cite{cfs} and by Morris \cite{Morris}.

In 1935, Erd\H{o}s and Szekeres \cite{ES} obtained the first non-trivial upper bound \(R(\ell,k)\le \binom{\ell+k-2}{\ell-1}\), which in the diagonal setting gives \(R(\ell,\ell)\le 4^\ell\). 
For a long time, the problem saw little progress until R\"odl (unpublished) and later Graham and R\"odl~\cite{GR} gave nontrivial improvements. 
Thomason~\cite{T1988} then achieved a polynomial improvement when \(k\) and \(\ell\) are of the same order.
Conlon~\cite{Conlon2009} extended Thomason's quasi-randomness approach to obtain a superpolynomial improvement when \(k\) and \(\ell\) grow together, and Sah \cite{Sah2023} later refined these methods further.
Recently, Campos, Griffiths, Morris and Sahasrabudhe \cite{CGMS} obtained the first exponential improvement over the classical bound, showing that for all integers \(\ell\le k\),
$R(\ell,k)\leq e^{-\ell/400+o(k)} \binom{k+\ell}{\ell}.$
In particular, \(R(\ell,\ell)\le (4-\varepsilon)^\ell\) for some absolute constant \(\varepsilon>0\).
Gupta, Ndiaye, Norin and Wei \cite{GNNW} refined the CGMS argument and improved the bounds further, showing
$R(\ell,k)\leq e^{-\ell/20+o(k)} \binom{k+\ell}{\ell}$
for all \(\ell\le k\); consequently, \(R(\ell,\ell)\le 3.8^{\ell+o(\ell)}\). 
Very recently, Balister, Bollob\'as, Campos, Griffiths, Hurley, Morris, Sahasrabudhe, and Tiba \cite{BBCGHMST} gave an alternative proof, which also yields exponential improvements in multicolor settings.

While upper bounds have seen dramatic progress, lower bounds have witnessed a different kind of prosperity.
A classic result of Erd\H{o}s \cite{E1947} states that for any fixed \(C\ge 1\),
\[
R(\ell,C\ell) = \Omega\left(\ell \cdot p_C^{-\ell/2}\right),\quad 
p_C\in(0,1/2] \; \text{ solves } \; C = \frac{\log p_C}{\log(1-p_C)}.
\]
For \(C=1\) this gives the classical diagonal bound \(R(\ell,\ell)\ge \frac{\ell}{e\sqrt{2}}2^{\ell/2}\). 
Despite Spencer's modest improvement by a factor of two using the Lov\'asz Local Lemma \cite{Spencer1975}, Erd\H{o}s' bound remained essentially unchanged for decades.

The situation is quite different when one of the two parameters is fixed, and substantial progress has been made in this case. For \(\ell = 3\), improving on the lower bound of Erd\H{o}s \cite{E1961}, Kim \cite{Kim} firstly determined the correct order of \(R(3,k)\), matching the upper bound due to Ajtai, Koml\'os, and Szemer\'edi \cite{AKSz}:
$
R(3,k) = \Theta( k^2 / \log k).
$ Subsequent advances \cite{BK10,BK21,PGM,CJMS,HHKP} improved the lower bound to \(R(3,k)\ge (\frac{1}{2}+o(1))k^2/\log k\). In particular, Erd\H{o}s conjectured that for every fixed \(\ell\ge 4\), there exists a constant \(c>0\) such that \(R(\ell,k)> k^{\ell-1}/\log^c k\). For \(\ell=4\), a breakthrough of Mattheus and Verstra\"ete \cite{M-V} showed that \(R(4,k)=\Omega(k^3/\log^4 k)\). For every fixed \(\ell\ge 5\), despite considerable progress \cite{AKSz,BK10,lrz,Spencer1977}, the precise order of magnitude of \(R(\ell,k)\) remained unknown until recently, when Brada\v{c} \cite{bradac} achieved a breakthrough, proving that \(R(\ell,k)> k^{\ell-1}/\log^{2\ell-4} k\) for each \(\ell\ge5\).

While the case where one parameter is fixed has seen substantial progress, the situation where $\ell$ and $k$ grow together (i.e., $k=C\ell$) remained largely open for decades. This barrier was recently broken by a remarkable work of Ma, Shen, and Xie \cite{MSX}, who achieved the first exponential improvement over the classical lower bound. They showed that for every fixed $C>1$ and all sufficiently large $\ell$,
\[
R(\ell,C\ell)\ge \left(p_C^{-1/2}+\epsilon_{\mathrm{MSX}}\right)^\ell
\]
for some constant $\epsilon_{\mathrm{MSX}}>0$ depending on $C>1$.

Very recently, Hunter, Milojevi\'{c}, and Sudakov \cite{HMS} introduced an elegant alternative proof that applies the Gaussian random graph model instead of the random sphere graph model of Ma et al. \cite{MSX}, leading to a much cleaner argument. More importantly, their quantitative bounds improved upon those of MSX; in particular, for large $C$ one may take $\epsilon_{\mathrm{HMS}}=(e^{1/24}-1+o(1))p_C^{-1/2}$. 

In this paper we further refine the HMS analysis. Our key innovation is a sharp cumulant generating function bound for upper-truncated Gaussians (Lemma~\ref{lem:CGF}), which improves upon the subgaussian estimate used in \cite{HMS}. As a result, we obtain a strictly larger exponent \(\eta > \epsilon_{\mathrm{HMS}}\).

\begin{theorem}\label{thm:main}
For every \(C>1\) there is a constant \(\eta>\epsilon_{\mathrm{HMS}}\) such that for all sufficiently large \(\ell\),
\[
R(\ell,C\ell)\ge \left(p_C^{-1/2}+\eta\right)^\ell,
\]
where \(p_C\in(0,1/2)\) is as above. 
Moreover, as \(C\to\infty\) we may take 
\[
\eta = \left(e^{1/24}-1\right)p_C^{-1/2}+\Theta\left({p_C^{-1/2}}\big/{\log C}\right).
\]
\end{theorem}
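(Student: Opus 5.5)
We follow the Gaussian random graph framework of Hunter, Milojevi\'c and Sudakov~\cite{HMS} and change only the estimate that controls the clique probability. Take $N=(p_C^{-1/2}+\eta)^\ell$ and independent standard Gaussian vectors $\bx_1,\dots,\bx_N\in\RR^k$ with $k=\Theta(\ell)$ (the dimension is a parameter to optimise). Join $i\sim j$ in red when $\langle \bx_i,\bx_j\rangle\ge \tau\sqrt{k}$ for a threshold $\tau$ chosen so that the edge density is about $p_C$; all other pairs are blue. As in \cite{HMS}, we then delete a vertex from each red $K_\ell$ and from each blue $K_{C\ell}$, and use the first moment method. Two estimates are needed:
\[
\EE[\#\text{red }K_\ell]\le \tfrac{N}{4}
\quad\text{and}\quad
\EE[\#\text{blue }K_{C\ell}]\le \tfrac{N}{4}.
\]
The blue-clique computation, a small-ball type bound for many pairwise "non-adjacent" vectors, we take over from \cite{HMS} unchanged.

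\textbf{Red cliques.} The probability that $\ell$ vectors form a red clique is estimated by conditioning on the Gram structure. This reduces to an exponential moment of a sum of $\binom{\ell}{2}$ correlated quantities of the form $\langle \bx_i,\bx_j\rangle$. After the HMS decoupling, each of these behaves like an \emph{upper-truncated} Gaussian: the relevant coordinate is $Z\mathbf{1}\{Z\le t\}$, or $Z$ conditioned on $Z\le t$.
\begin{itemize}
\item HMS bound $\log\EE e^{\lambda Z_{\le t}}$ by the subgaussian estimate $\lambda^2/2$.
\item We instead insert Lemma~\ref{lem:CGF}, the sharp CGF bound. We expect it to read roughly
\[
\log\EE e^{\lambda Z_{\le t}}\le \tfrac{\lambda^2}{2}+\log\Phi(t-\lambda)-\log\Phi(t),
\]
or a clean upper bound on this quantity.
\end{itemize}
The correction term $\log\Phi(t-\lambda)-\log\Phi(t)$ is strictly negative for $\lambda>0$. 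Feeding this into the HMS optimisation over $\lambda$ and $k/\ell$ gives an exponent $F(\eta)$ for the red-clique count.

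\textbf{Deducing $\eta>\epsilon_{\mathrm{HMS}}$.} HMS's $\epsilon_{\mathrm{HMS}}$ is the largest $\eta$ for which both expectations are $o(N)$ under their exponent $F_{\mathrm{HMS}}$. Our exponent satisfies $F\le F_{\mathrm{HMS}}$, with strict inequality at their optimiser, since the truncation term is negative and of order one there. The exponents are continuous, and the blue-clique constraint is unchanged. Hence we may increase $\eta$ slightly above $\epsilon_{\mathrm{HMS}}$ while keeping both expectations at most $N/4$. This proves the first part of Theorem~\ref{thm:main}.

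\textbf{Asymptotics as $C\to\infty$.} Here $p_C\approx(\log C)/C$, and the HMS optimiser has $\lambda$ and $t$ in a regime where their gain is $e^{1/24}-1$. We would expand our exponent around the HMS optimiser. The truncation point scales like $t\asymp\sqrt{\log(1/p_C)}\asymp\sqrt{\log C}$, while $\lambda$ stays bounded. Using the Mills ratio,
\[
\log\Phi(t-\lambda)-\log\Phi(t)\approx -\lambda\,\phi(t)/\Phi(t)+\dots .
\]
After the $\ell$-scaling, we expect this to translate into an additive improvement of order $1/\log C$ in the base of the exponent. Multiplying by $p_C^{-1/2}$ then gives the stated $\Theta(p_C^{-1/2}/\log C)$.
\begin{itemize}
\item For the upper bound on the gain: the improvement cannot exceed this order, because Lemma~\ref{lem:CGF} is sharp and the remaining constraints are exactly those of HMS.
\item For the lower bound: choose $\lambda$ perturbed by $O(1/\log C)$ from the HMS choice.
\end{itemize}

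\textbf{Main obstacle.} The hardest step is the bookkeeping in this last asymptotic expansion. The truncation correction, measured against the leading HMS terms, must be shown to be exactly of order $1/\log C$ and not smaller. This means tracking how $t$, $\lambda$ and $k/\ell$ depend on $C$ in the HMS optimisation, and checking that no competing lower-order term cancels the gain. A secondary subtlety is verifying that the decoupling in \cite{HMS} really produces upper-truncated Gaussians to which Lemma~\ref{lem:CGF} applies, rather than the two-sided conditioning that would occur in a different reduction.
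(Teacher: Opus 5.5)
\textbf{The proposal misidentifies where the improvement comes from.} Because of this, your key claim is unsupported: that your exponent is strictly below $F_{\mathrm{HMS}}$ at their optimiser ``since the truncation term is negative and of order one''.

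HMS do not bound the uncentred truncated variable by $\lambda^2/2$. In the Bartlett reverse induction they keep the exact truncated means $\mu_i=-\frac{a}{p\sqrt d}+O(\frac{1}{D\sqrt d})$ through the term $\lambda\EE[S]$. That term is precisely the source of their gain $-\frac{a^3}{p^3\sqrt d}\binom{r}{3}$. The subgaussian bound with proxy $1/d$ is used only for the centred fluctuations $\xi_i=X_i-\mu_i$.

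The term you isolate, $\log\Phi(t-\lambda)-\log\Phi(t)\approx-\lambda\phi(t)/\Phi(t)$, is to first order exactly this mean shift. So it yields nothing beyond HMS.

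What the paper exploits is the second-order remainder. For the centred variable $Y=Z+m(b)$ one has $K''(u)=V(b-u)$. Monotonicity of $V(b)=\operatorname{Var}(Z\mid Z\le b)$ (Lemma~\ref{lem:truncated_variance_monotone}, proved via $V'=m\bigl((b+m)^2-V\bigr)$ and log-concavity of $\Phi$) then gives $K(u)\le\frac{u^2}{2}V(b)$, with $V(-c_p)=1-\gamma(p)<1$. The paper applies this only to the linear fluctuation term $\sum_i A_i\xi_i$. It first splits off $\sum_{i<j}\xi_i\xi_j$ by H\"older and handles that piece with the HMS quadratic lemma. In the linear term $u=O(1/D)$, and the saving is $\gamma(p)\frac{a^4}{2p^4d}k(k-1)^2$ per step. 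In total this is $\beta_{\mathrm{quad}}(p)r^4/d$, i.e.\ $\beta_{\mathrm{quad}}(p)/D^2$ in the normalised red exponent. That is one power of $D^{-1}$ below the HMS gain $\kappa(p)/D$, not an order-one effect.

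\textbf{Consequences for the rest of your argument.} Your asymptotic paragraph is heuristic (``we expect'') and rests on the mean term. In the paper the $1/\log C$ is the explicit size of $\lambda_C\beta_{\mathrm{quad}}(p_C)/D^2$ at the HMS choice $D=4aC/(1-p)\sim 2c_{p_C}^3$. Using $a\sim c_{p_C}p_C$, $\gamma(p_C)\to1$ and $\lambda_C\to1$, this gives $\frac{c_{p_C}^4/8}{4c_{p_C}^6}=\frac{1}{32c_{p_C}^2}\sim\frac{1}{64\log C}$. No matching upper bound on the gain is needed, since any smaller valid $\eta$ may be taken.

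Your continuity step also needs repair. At the HMS optimum the red and blue exponents coincide, so with the blue side unchanged you cannot simply raise $\eta$. You must shift $p$ along the transversal crossing of $-\frac12\log p$ and $-\frac C2\log(1-p)$. This is how the paper obtains the gain $\lambda_C\beta_{\mathrm{quad}}(p_C)/D^2$, with $\lambda_C=\frac{Cp_C}{Cp_C+1-p_C}$. Note that the free parameters are $p$ and $D$; $\lambda=-a\sqrt d/p$ is forced.

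Finally, the setup must match the paper's in two respects:
\begin{itemize}
\item The dimension must be $d=D^2\ell^2$ rather than $\Theta(\ell)$, so that the $\binom{r}{3}/\sqrt d$ corrections live on the $\ell^2$ scale.
\item The sparse colour must be the event $\langle\bx_i,\bx_j\rangle<-c_p/\sqrt d$. If red is an upper threshold $\langle\bx_i,\bx_j\rangle\ge\tau\sqrt k$ of density $p_C$, red cliques become more likely than $p^{\binom{\ell}{2}}$. The conditioning then produces lower- rather than upper-truncated Gaussians.
\end{itemize}
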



{\em Remark.}  Brada\v{c} \cite{bradac} proved that \(R(\ell,k)\ge (k/\ell)^{(1-o(1))\ell}\) for \(k\gg \ell\), which improves earlier bounds in the far off-diagonal regime. In addition, for fixed \(C>1\), $R(\ell,C\ell)\ge (2^{1-\frac{1}{2C}})^\ell.$ In the near-diagonal regime \(C\to1^+\), this bound improves upon \cite{MSX,HMS} and Theorem~\ref{thm:main}.


\medskip
The rest of the paper is organized as follows.
Section~\ref{sec:preliminaries} collects preliminary facts about Gaussian distributions, concentration inequalities, truncated Gaussians and subgaussian estimates; it also contains the formal definition of the Gaussian random graph and the basic probability quantities we will use. 
Section~\ref{sec:setup} introduces the notion of perfect sequences and the Bartlett decomposition, which together form the geometric and algebraic framework for the reverse induction at the core of the HMS proof. 
In Section~\ref{sec:cgf} we prove the refined cumulant generating function bound for truncated Gaussians, which is the main novelty of this work. 
Section~\ref{sec:improved} incorporates this improvement into the induction to obtain a sharper independent set probability. 
Finally, Section~\ref{sec:proof} completes the proof of Theorem~\ref{thm:main}.

\medskip
\noindent\textbf{Notation.}
In the remainder of the paper, we write $O(\cdot)$ (or $O_p(\cdot)$) to denote a quantity bounded by a constant that may depend on $p$ (and hence on \(C\), since \(p=p_C\) is determined by \(C\)), but is independent of \(\ell\), \(d\), \(D\). We also write $o_\ell(\cdot)$ to denote a quantity that tends to zero as $\ell\to\infty$ while $D$ remains fixed.

\section{Preliminaries}
\label{sec:preliminaries}

Our proof follows the general framework of Hunter, Milojevi\'{c} and Sudakov \cite{HMS}, which itself simplified and refined the approach of Ma, Shen and Xie \cite{MSX}. 
The key idea is to use the Bartlett decomposition to perform a reverse induction on the columns of the Gram matrix, reducing the problem of estimating clique probabilities to controlling exponential moments of quadratic forms in truncated Gaussians.

Before presenting the main proof, we collect several standard facts about Gaussian random vectors, define the Gaussian random graph model, and introduce the basic probability quantities that will be used throughout the paper.

\medskip
\noindent\textbf{Gaussian random graph.}
We begin with the formal definition of the Gaussian random graph, which is the central object of our study.

\begin{definition}\label{def:gaussian_graph}
Let \(n,d\) be positive integers, and let \(p\in(0,1/2)\). 
Let \(c_p>0\) be the unique real number for which \(\PP[Z\le -c_p]=p\), where \(Z\sim\mathcal N(0,1)\) is the standard one-dimensional Gaussian. 
The vertices of the Gaussian random geometric graph \(G(n,d,p)\) correspond to vectors \(\bx_1,\ldots,\bx_n\) independently sampled from the \(d\)-dimensional normal distribution \(\mathcal N(0,\frac1d I_d)\). 
Vertices \(i,j\in[n]\) are connected by an edge if \(\langle\bx_i,\bx_j\rangle\ge -c_p/\sqrt d\). 
\end{definition}

Given a graph \(G\sim G(n,d,p)\), we define a red/blue edge-coloring of the complete graph \(K_n\) by declaring the edges of \(G\) to be blue and all other edges to be red.

For a given set of \(r\) vertices, we denote the probability that they form a red clique (i.e., an independent set in \(G\)) and a blue clique (i.e., a clique in \(G\)) as follows:
\[
P_{\mathrm{red},r}=\PP[G(r,d,p)\text{ is an independent set}],\quad 
P_{\mathrm{blue},r}=\PP[G(r,d,p)\text{ is a clique}].
\]
In the Erd\H{o}s-R\'enyi random graph, edges are independent, giving 
\[P_{\mathrm{red},\ell}=p^{\binom{\ell}{2}},\quad P_{\mathrm{blue},C\ell}=(1-p)^{\binom{C\ell}{2}}.\] 
In the Gaussian random graph, however, edges exhibit correlations: independent sets become less likely, while cliques become more likely.

\medskip
\noindent\textbf{Concentration inequalities.}
We first recall the following result, which controls the Euclidean norm of a high-dimensional Gaussian vector~\cite[Lemma~2.1]{HMS}. 

\begin{lemma}[Norm concentration \cite{HMS}]\label{lem:norm}
If \(\bx\sim\mathcal N(0,\frac1d I_d)\) and \(\delta\in(0,1)\), then
\[
\PP\bigl[\|\bx\|_2\in(1-\delta,1+\delta)\bigr]\ge 1-2\exp(-\delta^2 d/10).
\]
\end{lemma}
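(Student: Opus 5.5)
The plan is to prove the two tails separately by a Chernoff argument. The key observation is that $\|\bx\|_2^2 = \frac1d\sum_{i=1}^d g_i^2$ with $g_i$ i.i.d.\ standard Gaussians, so $d\|\bx\|_2^2\sim\chi^2_d$. The event $\|\bx\|_2\notin(1-\delta,1+\delta)$ is contained in the union of two events:
\[
\|\bx\|_2^2\ge(1+\delta)^2 \quad\text{and}\quad \|\bx\|_2^2\le(1-\delta)^2 .
\]
Since $(1+\delta)^2\ge 1+2\delta$ and $(1-\delta)^2\le 1-\delta$ for $\delta\in(0,1)$, it suffices to bound
\[
\PP\Bigl[\textstyle\sum g_i^2\ge d(1+2\delta)\Bigr] \quad\text{and}\quad \PP\Bigl[\textstyle\sum g_i^2\le d(1-\delta)\Bigr]
\]
each by $\exp(-\delta^2 d/10)$.

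For each tail I would use the moment generating function $\EE e^{\lambda g^2}=(1-2\lambda)^{-1/2}$, valid for $\lambda<1/2$, and apply Markov's inequality.

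\textbf{Upper tail.} Markov gives
\[
\PP\Bigl[\textstyle\sum g_i^2\ge d(1+t)\Bigr]\le \exp\Bigl(-\tfrac d2\bigl(\lambda'(1+t)+\log(1-\lambda')\bigr)\Bigr),
\]
where $\lambda'=2\lambda$. Optimizing at $\lambda'=t/(1+t)$ yields $\exp\bigl(-\frac d2(t-\log(1+t))\bigr)$. Take $t=2\delta\le 2$. Then use the elementary inequality $t-\log(1+t)\ge t^2/(2(1+t))\ge t^2/6$ on $[0,2]$. This gives exponent at least $\frac d2\cdot\frac{4\delta^2}{6}=\delta^2 d/3$, which is comfortably at least $\delta^2 d/10$.

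\textbf{Lower tail.} Similarly, with $\lambda<0$,
\[
\PP\Bigl[\textstyle\sum g_i^2\le d(1-t)\Bigr]\le \exp\Bigl(-\tfrac d2\bigl(-t-\log(1-t)\bigr)\Bigr).
\]
Here $-t-\log(1-t)\ge t^2/2$. With $t=\delta$ this gives exponent $\delta^2 d/4\ge\delta^2 d/10$.

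Adding the two tails gives failure probability at most $2\exp(-\delta^2 d/10)$, which is the claimed bound. The only step needing care is the upper-tail elementary inequality $t-\log(1+t)\ge t^2/(2(1+t))$. It follows because the difference vanishes at $0$ and has nonnegative derivative: the derivative of $t-\log(1+t)$ is $t/(1+t)$, which is at least $\frac{d}{dt}\frac{t^2}{2(1+t)}=\frac{t^2+2t}{2(1+t)^2}$ since $2t(1+t)\ge t^2+2t$. The constant $1/10$ leaves ample slack, so no delicate optimization is needed. Alternatively, one could simply cite the Laurent–Massart $\chi^2$ bounds.
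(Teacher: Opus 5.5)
Your proof is correct. The reductions $(1+\delta)^2\ge 1+2\delta$ and $(1-\delta)^2\le 1-\delta$ are valid; the latter uses $\delta<1$, and positivity of $1-\delta$ makes squaring monotone. The Chernoff optimizations give exactly $\exp\bigl(-\frac{d}{2}(t-\log(1+t))\bigr)$ and $\exp\bigl(-\frac{d}{2}(-t-\log(1-t))\bigr)$, and your elementary inequalities yield tail exponents $\delta^2 d/3$ and $\delta^2 d/4$. Both are stronger than the required $\delta^2 d/10$.

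The paper gives no proof of this lemma. It imports the statement verbatim from HMS as a standard $\chi^2_d$ concentration fact. Your argument is the self-contained version of what the paper takes on citation, namely the usual moment-generating-function route, equivalent to invoking the Laurent--Massart bounds you mention. It also shows how much slack the constant $1/10$ leaves.
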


The next lemma bounds the length of the projection of a Gaussian vector onto a fixed low-dimensional subspace~\cite[Lemma~2.3]{HMS}. 
For a subspace \(W\subseteq\RR^d\), we denote by \(\pi_W:\RR^d\to W\) the orthogonal projection onto \(W\).

\begin{lemma}[Projection tail bound \cite{HMS}]\label{lem:proj}
Let \(1\le s\le C\ell\) and let \(W\) be any \(s\)-dimensional subspace of \(\RR^d\). 
If \(\bx\sim\mathcal N(0,\frac1d I_d)\) and \(\alpha=100C\log(10/p)\), then
\[
\PP\Bigl[\|\pi_W(\bx)\|_2\ge \frac{\alpha\sqrt\ell}{\sqrt d}\Bigr]\le \Bigl(\frac{p}{10}\Bigr)^{10C\ell}.
\]
\end{lemma}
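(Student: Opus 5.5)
The bound follows by rotation invariance and a chi-square tail estimate. Since the law \(\mathcal N(0,\frac1d I_d)\) is invariant under orthogonal transformations, we may assume \(W\) is spanned by the first \(s\) coordinate vectors. Then \(\|\pi_W(\bx)\|_2^2=\frac1d\sum_{i=1}^s g_i^2\) with \(g_1,\dots,g_s\) i.i.d.\ standard Gaussians. The event in the lemma therefore becomes
\[
\sum_{i=1}^s g_i^2\ge \alpha^2\ell .
\]
Note that the dimension \(d\) drops out entirely, which is why the bound is uniform in \(d\).

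Next I bound this chi-square tail with the exponential Markov inequality at parameter \(\lambda=1/4\). Using \(\EE e^{g^2/4}=(1-1/2)^{-1/2}=\sqrt2\), I get
\[
\PP\Bigl[\sum_{i=1}^s g_i^2\ge \alpha^2\ell\Bigr]\le 2^{s/2}e^{-\alpha^2\ell/4}\le 2^{C\ell/2}e^{-\alpha^2\ell/4},
\]
where the last step uses \(s\le C\ell\).

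It remains to compare this with \((p/10)^{10C\ell}=\exp(-10C\ell\log(10/p))\). Write \(L=\log(10/p)\), and note \(L\ge\log 20>1\) because \(p<1/2\). Then
\[
\frac{\alpha^2}{4}=2500\,C^2L^2\ge 2500\,C L\ge 10CL+\frac{C\log 2}{2}.
\]
Here the first inequality uses \(C\ge1\) and \(L\ge 1\), and the second holds with enormous room to spare. Substituting this into the previous display gives \(2^{C\ell/2}e^{-\alpha^2\ell/4}\le e^{-10CL\ell}=(p/10)^{10C\ell}\), which is the claim.

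This argument has no real obstacle. The constant \(100\) in \(\alpha\) is far more than needed, so any fixed choice of \(\lambda\in(0,1/2)\) works. The only things to check are that the reduction to the first \(s\) coordinates is legitimate (rotation invariance) and that \(C\ge1\), \(L\ge1\) hold, so that \(\alpha^2\) dominates the linear terms.
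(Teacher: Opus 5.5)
Your proof is correct: rotation invariance reduces the event to the $\chi^2_s$ tail $\sum_{i\le s} g_i^2\ge\alpha^2\ell$, and the Chernoff bound at $\lambda=1/4$ together with $2500C^2L^2\ge 10CL+\tfrac{C}{2}\log 2$ gives exactly $(p/10)^{10C\ell}$. The paper does not reprove this lemma but imports it from HMS, and your rotation-invariance plus chi-square tail argument is the standard proof of this fact. One small overstatement is in your closing remark: not every fixed $\lambda\in(0,1/2)$ suffices, since for $C$ near $1$ and $p$ near $1/2$ one needs roughly $\lambda\ge 1/(1000CL)$ (e.g.\ $\lambda=10^{-4}$ gives only $\lambda\alpha^2\approx 9<10CL\approx 30$). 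This does not affect your argument with $\lambda=1/4$.
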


\medskip
\noindent\textbf{Truncated Gaussians.}
Since our analysis heavily involves Gaussian random variables conditioned on being below a certain threshold, we recall some useful facts about truncated Gaussians. 

Let \(\phi\) and \(\Phi\) denote the standard normal density and distribution function, respectively. 
It is known~\cite{gor} that for a standard normal \(Z\), the inverse Mills ratio satisfies for every \(t<0\),
\[
|t|\le \frac{\phi(t)}{\Phi(t)}\le |t|+\frac1{|t|}. 
\]
The function \(\Phi\) is log-concave, thereby $\log \Phi(t+\varepsilon)\leq \log \Phi(t)+\varepsilon (\log \Phi(t))'$, which implies that for any $\epsilon$,
\[
\Phi(t+\epsilon)\le \Phi(t)e^{\epsilon\phi(t)/\Phi(t)}. 
\]

The following lemma gives the expectations of one-sided truncated Gaussians~\cite[Lemma~2.4]{HMS}.
\begin{lemma}[Expectations of truncated Gaussians \cite{HMS}]\label{lem:trunc_exp}
Let \(b,\epsilon\in\RR\), and let \(X\sim\mathcal N(0,1/d)\). Then
\[
\EE\left[X\mid X\le \frac{b}{\sqrt d}\right]=-\frac{\phi(b)}{\Phi(b)\sqrt d},\quad
\EE\left[X\mid X\ge \frac{b}{\sqrt d}\right]=\frac{\phi(b)}{(1-\Phi(b))\sqrt d},
\]
and the same formulas hold with \(b+\epsilon\) up to an additive error \(O(\epsilon/\sqrt d)\).
\end{lemma}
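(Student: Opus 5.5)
The statement is Lemma~\ref{lem:trunc_exp}, so the argument is a direct computation followed by a perturbation estimate.

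\textbf{Exact formulas.} First reduce to the standard normal. Write \(X = Z/\sqrt d\) with \(Z\sim\mathcal N(0,1)\). The event \(X\le b/\sqrt d\) is then exactly \(Z\le b\), so
\[
\EE\Bigl[X\Bigm| X\le \tfrac{b}{\sqrt d}\Bigr]=\frac{1}{\sqrt d}\,\EE[Z\mid Z\le b].
\]
Next compute the truncated mean directly. Since \(\phi'(z)=-z\phi(z)\),
\[
\int_{-\infty}^b z\phi(z)\,dz=\bigl[-\phi(z)\bigr]_{-\infty}^{b}=-\phi(b).
\]
Dividing by \(\PP[Z\le b]=\Phi(b)\) gives the first formula. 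For the upper truncation, the same identity gives
\[
\int_b^\infty z\phi(z)\,dz=\phi(b),
\]
and dividing by \(1-\Phi(b)\) gives the second formula. Equivalently, apply the first formula to \(-Z\) at level \(-b\) and use the symmetry \(\phi(-b)=\phi(b)\), \(\Phi(-b)=1-\Phi(b)\).

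\textbf{Perturbation in \(b\).} For the statement ``with \(b+\epsilon\) up to an additive error \(O(\epsilon/\sqrt d)\)'', set
\[
m(b)=-\frac{\phi(b)}{\Phi(b)}.
\]
Differentiating, \(m'(b)=1-\mathrm{Var}(Z\mid Z\le b)\) (equivalently \(m'(b)=-b\,m(b)-m(b)^2\)). A conditional variance is nonnegative, and the conditional variance of a truncated normal is at most \(1\), since truncation to a half-line is log-concave reweighting and the Brascamp--Lieb inequality applies. Hence \(0\le m'(b)\le 1\), so \(m\) is \(1\)-Lipschitz and
\[
|m(b+\epsilon)-m(b)|\le|\epsilon|.
\]
Dividing by \(\sqrt d\) gives the claimed \(O(\epsilon/\sqrt d)\) error. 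The upper-truncated case follows by the same symmetry as before.

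\textbf{Possible obstacle.} The only nontrivial step is the bound \(\mathrm{Var}(Z\mid Z\le b)\le 1\), and Brascamp--Lieb should settle it. If a more elementary argument is preferred, the Mills ratio bounds quoted above can control \(m'\) on compact ranges of \(b\). Since \(b\) will stay near \(-c_p\) in the applications, a constant depending on \(p\) is acceptable, which matches the paper's \(O_p\) convention.
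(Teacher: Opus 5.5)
Your proof is correct. The paper gives no proof of this lemma (it is quoted from HMS). However, the computations it carries out in the proof of Lemma~\ref{lem:truncated_variance_monotone} coincide with yours for the exact formulas. The same integration by parts gives $\EE[Z\mid Z\le b]=-\phi(b)/\Phi(b)$. The identity $V(b)=1+m'(b)$ there, with the paper's convention $m=\phi/\Phi$, is your $m'=1-\operatorname{Var}(Z\mid Z\le b)$.

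The perturbation step is where you differ. In Section~\ref{sec:improved} the paper only invokes a local Taylor expansion of the inverse Mills ratio around $-c_p$, which gives an error $O_p(\epsilon)$ for small $\epsilon$. Your bound $0\le 1-V\le 1$ instead makes $b\mapsto\EE[Z\mid Z\le b]$ globally $1$-Lipschitz. So the error is at most $|\epsilon|/\sqrt d$ uniformly in $b,\epsilon\in\RR$, which matches the literal quantifiers of the statement.

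Your one heavy input, $V\le 1$ via Brascamp--Lieb, can be made elementary. Write $1-V(b)=\frac{\phi(b)}{\Phi(b)}\bigl(b+\frac{\phi(b)}{\Phi(b)}\bigr)$; the second factor is $\EE[b-Z\mid Z\le b]>0$, the quantity $\EE W_b$ from the proof of Lemma~\ref{lem:truncated_variance_monotone}. It also follows from the variance-proxy fact for truncated Gaussians quoted in Section~\ref{sec:preliminaries}, since a variance proxy bounds the variance.

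One small slip: with your convention $m=-\phi/\Phi$ one has $m'=-bm+m^2$. The expression $-bm-m^2$ is the derivative of $\phi/\Phi$, i.e.\ $V-1$. This is harmless, as you only use $m'=1-V$.
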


\medskip
\noindent\textbf{Subgaussian estimates.}
A standard tool for bounding exponential moments of sums of random variables is the theory of subgaussian distributions. 
Recall that a random variable \(X\) is subgaussian with variance proxy \(\sigma^2\) if 
\[
\EE\left[e^{\lambda(X-\EE X)}\right]\le e^{\sigma^2\lambda^2/2}\quad\text{for all }\lambda\in\RR.
\]
A useful fact, proved in~\cite{Barreto}, is that truncated Gaussians have variance proxy at most that of the original Gaussian.

The following lemma bounds the exponential moment of the square of a centered subgaussian variable~\cite[Lemma~2.5]{HMS}.

\begin{lemma}[Exponential moment of a square \cite{HMS}]\label{lem:subg_square}
Let \(X\) be a subgaussian with variance proxy \(\sigma^2\) and \(\EE [X]=0\). 
For \(0\le\lambda<\frac1{2\sigma^2}\),
$
\EE[e^{\lambda X^2}]\le 1+\frac{4\lambda\sigma^2}{1-2\lambda\sigma^2}.
$
\end{lemma}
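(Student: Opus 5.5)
The statement to prove is Lemma~\ref{lem:subg_square}: for a centered subgaussian $X$ with variance proxy $\sigma^2$ and $0\le\lambda<\frac1{2\sigma^2}$, we have $\EE[e^{\lambda X^2}]\le 1+\frac{4\lambda\sigma^2}{1-2\lambda\sigma^2}$. I would argue by expanding the exponential as a power series and bounding each even moment of $X$ from the subgaussian tail.

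\textbf{Step 1: tail bound.} Since $\EE X=0$, Markov's inequality applied to $e^{sX}$ with $s=t/\sigma^2$ gives
\[
\PP[X\ge t]\le e^{-t^2/(2\sigma^2)}.
\]
The same bound holds for $-X$, so $\PP[|X|\ge t]\le 2e^{-t^2/(2\sigma^2)}$.

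\textbf{Step 2: moment bound.} By the layer-cake formula,
\[
\EE[X^{2k}]=\int_0^\infty 2k\,t^{2k-1}\,\PP[|X|\ge t]\,dt .
\]
Inserting the tail bound and substituting $u=t^2/(2\sigma^2)$ gives
\[
\EE[X^{2k}]\le 2k\,(2\sigma^2)^k\,\Gamma(k)=2\,(2\sigma^2)^k\,k!\qquad(k\ge1).
\]

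\textbf{Step 3: sum the series.} By monotone convergence we may expand termwise:
\[
\EE[e^{\lambda X^2}]=1+\sum_{k\ge1}\frac{\lambda^k\,\EE[X^{2k}]}{k!}\le 1+2\sum_{k\ge1}(2\lambda\sigma^2)^k=1+\frac{4\lambda\sigma^2}{1-2\lambda\sigma^2}.
\]
The geometric series converges precisely because $2\lambda\sigma^2<1$, which is the assumed range of $\lambda$.

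There is no real obstacle here. The only care needed is in Step 2: use the two-sided tail, which contributes the factor $2$, and evaluate the Gamma integral exactly rather than through a cruder estimate, so that the constant $4$ in the statement comes out exactly.
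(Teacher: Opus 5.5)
Your proof is correct and complete. The Chernoff step gives the two-sided tail $2e^{-t^2/(2\sigma^2)}$. The Gamma integral gives exactly $\EE[X^{2k}]\le 2(2\sigma^2)^k k!$. Tonelli justifies the termwise expansion for $\lambda\ge 0$. The resulting geometric series $1+2\sum_{k\ge1}(2\lambda\sigma^2)^k$ is precisely the stated bound, and it converges because $2\lambda\sigma^2<1$.

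The paper does not prove this lemma; it quotes it from HMS, so there is no in-paper argument to compare against. Yours is the standard one.

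A slightly shorter equivalent route applies the layer-cake formula directly to $e^{\lambda X^2}$ instead of to each even moment: $\EE[e^{\lambda X^2}]=1+\int_0^\infty 2\lambda t\,e^{\lambda t^2}\,\PP[|X|\ge t]\,dt\le 1+4\lambda\int_0^\infty t\,e^{-(1/(2\sigma^2)-\lambda)t^2}\,dt$. This evaluates in one line to $1+\frac{4\lambda\sigma^2}{1-2\lambda\sigma^2}$. It is the same computation as yours with the power series summed inside the integral rather than after it.
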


The following lemma, due to \cite[Lemma 2.6]{HMS}, bounds the exponential moment of a quadratic form $\sum_{i<j} X_i X_j$ in independent subgaussian variables.

\begin{lemma}[Quadratic exponential moment \cite{HMS}]\label{lem:quadratic}
Let \(X_1,\ldots,X_k\) be independent subgaussian variables with variance proxy \(1/d\), and let \(\lambda\in\RR\) satisfy \(d\ge 4|\lambda|k\). 
If \(S=\sum_{1\le i<j\le k}X_iX_j\), then
$
\EE[e^{\lambda S}]\le \exp\left(\lambda\EE[S]+\frac{\lambda^2k^2}{d}\sum_{j=1}^k(\EE X_j)^2+\frac{4|\lambda|k}{d}\right).
$\end{lemma}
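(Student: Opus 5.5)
We prove Lemma~\ref{lem:quadratic} by decoupling the quadratic form into a linear part and a centered quadratic part, and then applying Lemma~\ref{lem:subg_square} to the centered part. Write $\mu_j=\EE X_j$ and $Y_j=X_j-\mu_j$, so the $Y_j$ are independent, centered and subgaussian with variance proxy $1/d$. Expanding,
\[
S=\sum_{i<j}\mu_i\mu_j+\sum_{j}\Bigl(\sum_{i\ne j}\mu_i\Bigr)Y_j+\sum_{i<j}Y_iY_j ,
\]
and $\EE S=\sum_{i<j}\mu_i\mu_j$. Set $L=\sum_j a_jY_j$ with $a_j=\sum_{i\ne j}\mu_i$, and $Q=\sum_{i<j}Y_iY_j$. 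By Cauchy--Schwarz (or Hölder with exponents $2,2$),
\[
\EE e^{\lambda(S-\EE S)}\le \bigl(\EE e^{2\lambda L}\bigr)^{1/2}\bigl(\EE e^{2\lambda Q}\bigr)^{1/2}.
\]

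\textbf{Linear part.} By independence and the subgaussian property,
\[
\EE e^{2\lambda L}\le \exp\Bigl(\tfrac{2\lambda^2}{d}\sum_j a_j^2\Bigr).
\]
Moreover $a_j^2\le k\sum_i\mu_i^2$, so $\sum_j a_j^2\le k^2\sum_j\mu_j^2$. The square root of this factor therefore contributes exactly $\exp\bigl(\frac{\lambda^2k^2}{d}\sum_j\mu_j^2\bigr)$.

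\textbf{Quadratic part.} This is the main obstacle, since $Q$ is not a sum of independent terms. I would handle it by writing $2Q=(\sum_j Y_j)^2-\sum_j Y_j^2$ and dropping the negative term when $\lambda\ge 0$. When $\lambda<0$, I would instead use $-2Q=\sum_jY_j^2-(\sum_jY_j)^2\le \sum_j Y_j^2$.

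\emph{Case $\lambda\ge0$.} We get $\EE e^{2\lambda Q}\le \EE e^{\lambda T^2}$ with $T=\sum_j Y_j$. Here $T$ is centered and subgaussian with variance proxy $k/d$. Lemma~\ref{lem:subg_square} applies since $\lambda k/d\le 1/4<1/2$, and gives
\[
\EE e^{\lambda T^2}\le 1+\frac{4\lambda k/d}{1-2\lambda k/d}\le 1+\frac{8\lambda k}{d}\le e^{8\lambda k/d}.
\]
Taking the square root yields $e^{4\lambda k/d}$.

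\emph{Case $\lambda<0$.} We get $\EE e^{2\lambda Q}\le \prod_j\EE e^{|\lambda|Y_j^2}$, with each factor at most $1+8|\lambda|/d$ by Lemma~\ref{lem:subg_square}. The product is at most $e^{8|\lambda|k/d}$, and its square root is $e^{4|\lambda|k/d}$.

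Combining the two parts gives the claimed bound. One point needs checking: the Hölder step doubles $\lambda$. This is harmless, because the hypothesis $d\ge4|\lambda|k$ was exactly what we needed to keep $2\lambda\cdot\frac{k}{2d}$ within the range where Lemma~\ref{lem:subg_square} applies. If the constants came out too weak, I would fall back on Gaussian decoupling, linearizing $e^{\lambda T^2}$ via $\EE_g e^{\sqrt{2\lambda}gT}$ with an independent $g\sim\mathcal N(0,1)$.
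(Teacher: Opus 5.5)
Your proof is correct and complete. Centering, the Cauchy--Schwarz split into $L$ and $Q$, and the identity $2Q=(\sum_j Y_j)^2-\sum_j Y_j^2$ with the sign case split fed into Lemma~\ref{lem:subg_square} (where $d\ge 4|\lambda|k$ gives $2|\lambda|\sigma^2\le \tfrac12$ for $\sigma^2=k/d$, resp.\ $\sigma^2=1/d$) yield exactly the constants $\lambda^2k^2/d$ and $4|\lambda|k/d$, so the decoupling fallback is never needed.

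The paper does not reprove this lemma; it is quoted from HMS, and your argument is the standard one for it. The paper's own Lemma~\ref{lem:refined_S} follows your template. It only replaces the $(2,2)$ Cauchy--Schwarz split by H\"older exponents $1+O(D^{-1})$ and $O(D)$, and the subgaussian bound on the linear part by Lemma~\ref{lem:CGF}.
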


\section{Perfect sequences and Bartlett decomposition}
\label{sec:setup}

The Gaussian vectors defining our random graph are not independent in terms of their inner products; however, typical sequences exhibit a certain regularity that simplifies analysis. 
Following \cite{HMS,MSX}, we formalize this notion as follows.
For convenience, we write \(\pi_{i}\) to denote the orthogonal projection onto \(\operatorname{span}\{\bx_1,\dots,\bx_i\}\) (and \(\pi_0\) for the zero map).

\begin{definition}\label{def:perfect}
A sequence of vectors \(\bx_1,\ldots,\bx_r\) is \textbf{perfect} if for all \(1\le i\le r\),
\[
\|\bx_i\|_2\in(1-\delta,1+\delta),\quad \text{and} \quad
\|\pi_{i-1}(\bx_i)\|_2:=\|\pi_{\mathrm{span}\{\bx_1,\ldots,\bx_{i-1}\}}(\bx_i)\|_2\le\frac{\alpha\sqrt\ell}{\sqrt d},
\]
where \(\alpha=100C\log(10/p)\) and \(\delta=\alpha d^{-1/4}\). 
\end{definition}

The definition ensures that each vector has norm close to one and that its projection onto the span of the previous vectors is small. 
Let \(P_{\mathrm{red},r}^*\) (resp. \(P_{\mathrm{blue},r}^*\)) denote the probability that \(\bx_1,\ldots,\bx_r\) form a red clique (resp. blue clique) and are perfect. 
The following proposition (in a slightly different form), proved in \cite[Proposition 3.1]{HMS}, bounds these perfect-sequence probabilities.

\begin{proposition}[Perfect-sequence probabilities \cite{HMS}]\label{prop:perfect_bound}
For \(d=D^2\ell^2\) with \(D\) sufficiently large, and for every \(1\le r\le C\ell\),
\[
P_{\mathrm{red},r}^*\le p^{\binom{r}{2}}\exp\left(-\frac{a^3}{p^3\sqrt{d}} \binom{r}{3} + K\frac{r^3}{D\sqrt{d}} \right),
\]
\[
P_{\mathrm{blue},r}^*\le (1-p)^{\binom{r}{2}}\exp\left(\frac{a^3}{(1-p)^3\sqrt{d}} \binom{r}{3} + K\frac{r^3}{D\sqrt{d}} \right),
\]
where \(a=\phi(c_p)=e^{-c_p^2/2}/\sqrt{2\pi}\), and $K=K(p)$ depends only on $p$. Moreover, the same bound holds for the unconditional probabilities \(P_{\mathrm{red},r}\) and \(P_{\mathrm{blue},r}\), because the probability of failing to be perfect is exponentially small and can be absorbed into the error term.
\end{proposition}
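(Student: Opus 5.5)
We prove Proposition~\ref{prop:perfect_bound} by reverse induction through the Bartlett decomposition, following \cite{HMS}. We treat the red case; the blue case is symmetric after replacing $p$ by $1-p$ and reversing the inequalities.

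\textbf{Setup.} Expose $\bx_1,\ldots,\bx_r$ one at a time. Write each new vector as $\bx_i=\pi_{i-1}(\bx_i)+\by_i$, where $\by_i$ is orthogonal to $\operatorname{span}\{\bx_1,\ldots,\bx_{i-1}\}$. By rotational invariance, in an orthonormal basis adapted to the Gram--Schmidt process, the coordinates of $\bx_i$ along the previous directions are independent $\mathcal N(0,1/d)$ variables $g_{i1},\ldots,g_{i,i-1}$. This is the Bartlett decomposition of the lower-triangular Cholesky factor of the Gram matrix. The diagonal entries have squared norms close to $1$, up to $\delta$, and perfectness controls all the off-diagonal sizes.

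\textbf{Edge conditions as linear constraints.} The condition that $\{i,j\}$ is a non-edge is $\langle \bx_i,\bx_j\rangle<-c_p/\sqrt d$. With $j<i$, this inner product equals $\sum_{k\le j} L_{ik}L_{jk}$. We isolate the newest coordinate $g_{ij}$, multiplied by $L_{jj}\approx 1$. Conditioning on the earlier columns, the constraint becomes
\[
g_{ij}\le \frac{-c_p-\sqrt d\sum_{k<j}L_{ik}L_{jk}}{\sqrt d\,L_{jj}}.
\]
We do the induction column by column in reverse. When column $j$ is integrated out, the variables $g_{ij}$ for $i>j$ are independent, each truncated to have threshold $-c_p/\sqrt d$ shifted by an $O(\alpha^2\ell/d)$ perturbation.

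\textbf{Integrating out a column.} Using $\Phi(t+\epsilon)\le\Phi(t)e^{\epsilon\phi(t)/\Phi(t)}$, integrating out column $j$ gives a factor $p^{\,r-j}$ times an exponential. The exponent is linear in the shifts $s_i=\sqrt d\sum_{k<j}L_{ik}L_{jk}$, with coefficient $a/p$ each.

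\textbf{Quadratic form.} Summing these linear terms over later columns produces the quantity $\frac{a}{p}\sqrt d\sum_{i<i'}g_{ik}g_{i'k}$, a quadratic form in the truncated entries of an earlier column $k$. We bound its exponential moment with Lemma~\ref{lem:quadratic}. Its main term $\lambda\EE[S]$ uses Lemma~\ref{lem:trunc_exp}: the mean of each truncated entry is $-\frac{a}{p\sqrt d}$. This gives $\lambda\binom{m}{2}\frac{a^2}{p^2 d}$ with $\lambda=\frac{a}{p}\sqrt d$, so each column contributes $-\frac{a^3}{p^3\sqrt d}\binom{m}{2}$. Summing over $m$ gives $-\frac{a^3}{p^3\sqrt d}\binom r3$.

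\textbf{Error terms.} The errors are handled as follows:
\begin{itemize}
\item The variance-type terms $\lambda^2k^2\sum(\EE X_j)^2/d$ are $O(r^3/d)$, which is within $r^3/(D\sqrt d)$ since $\sqrt d=D\ell$.
\item The term $4|\lambda|k/d$ is $O(r/\sqrt d)$.
\item The second-order Taylor corrections and the perturbations $O(\epsilon/\sqrt d)$ in Lemma~\ref{lem:trunc_exp} total $O(r^3\alpha^2\ell/d)$, which is again $O(r^3/(D\sqrt d))$.
\end{itemize}
The condition $d\ge 4|\lambda|k$ holds because $d/(\sqrt d\,r)\ge D$. Perfectness guarantees that all shifts are small, so the linearization is valid; it also gives $L_{jj}=1+O(\delta)$.

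\textbf{Unconditional bound.} To pass from $P^*_{\mathrm{red},r}$ to $P_{\mathrm{red},r}$, we bound the non-perfect event by Lemmas~\ref{lem:norm} and~\ref{lem:proj}. This gives a probability of at most $r(p/10)^{10C\ell}+2r\exp(-\delta^2d/10)$, which is far smaller than $p^{\binom r2}$ times the error factor. Here $r\le C\ell$, and $\delta^2d=\alpha^2\sqrt d$ is of order $D\ell\gg\ell^2$?

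That last comparison may fail for large $r$. The plan is to handle the norm part instead by including the norm constraint inside the conditioning, where it contributes only to the $O(\delta)$ error.

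\textbf{Main obstacle.} The hardest step is bookkeeping the dependence of the truncated variables on the previously integrated columns. The Taylor expansion must be applied conditionally and in the right order, so that after reversing the induction the variables in each column are genuinely independent truncated Gaussians with mean $-a/(p\sqrt d)+O(\epsilon/\sqrt d)$. Lemma~\ref{lem:quadratic} needs exactly this independence to apply.
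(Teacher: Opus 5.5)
Your bound for $P^*_{\mathrm{red},r}$ uses the same reverse Bartlett induction the paper takes from \cite{HMS}. Integrating out a column is Claim~\ref{claim:connection}. The accumulated linear terms become the quadratic form in the truncated entries of the current column. Lemmas~\ref{lem:trunc_exp} and~\ref{lem:quadratic} then give $-\frac{a^3}{p^3\sqrt d}\binom{k}{2}$ per column. Two small corrections there:
\begin{itemize}
\item The multiplier must be $\lambda=-a\sqrt d/p$. The entries have negative means, so $\EE S>0$.
\item Your per-column errors $4|\lambda|k/d$, together with the $O(\delta)$ per edge coming from $L_{jj}=1+O(\delta)$, sum to $O(r^2/\sqrt d+r^2\delta)$. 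This is dominated by $r^3/(D\sqrt d)$ only once $r$ is large (e.g.\ $r=\Theta(\ell)$, $\ell\to\infty$). That suffices for the application, and Proposition~\ref{prop:improved_ind} carries these terms explicitly.
\end{itemize}

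The genuine gap is the ``moreover'' part. No union bound $P_{\mathrm{red},r}\le P^*_{\mathrm{red},r}+\PP[\text{not perfect}]$ can work, and not only for the norm part you flagged. The projection failure probability $r(p/10)^{10C\ell}$ is $e^{-\Theta(\ell)}$, whereas $p^{\binom r2}=e^{-\Theta(\ell^2)}$ for $r=\Theta(\ell)$. So your ``far smaller'' is backwards as soon as $r\gg\sqrt\ell$. Your fallback of moving the norm constraint into the conditioning does not touch the projection constraint. That constraint cannot simply be dropped from the induction, because it is what keeps the threshold shifts $O(1/D)$ and hence the truncated means equal to $-a/(p\sqrt d)+O(1/(D\sqrt d))$.

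The missing idea is the extraction argument, used in \cite{HMS} and at the end of the proof of Proposition~\ref{prop:improved_ind}; the one-line justification in the statement is misleading. Scan $\bx_1,\dots,\bx_r$ in order. Keep $\bx_i$ iff $\|\bx_i\|_2\in(1-\delta,1+\delta)$ and its projection onto the span of the previously kept vectors has norm at most $\alpha\sqrt\ell/\sqrt d$. The kept vectors form a perfect sequence, and they form a red clique whenever all of $[r]$ does. For a fixed kept set $I$, condition on $(\bx_j)_{j\in I}$. Then the $\bx_i$ with $i\notin I$ are independent. Each fails its test, relative to the span of $\{\bx_j: j\in I,\ j<i\}$, with probability at most $2(p/10)^{10C\ell}$ by Lemmas~\ref{lem:norm} and~\ref{lem:proj} (for $D$ large). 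Hence
\[
P_{\mathrm{red},r}\le\sum_{u=0}^{r}\binom{r}{u}\,P^*_{\mathrm{red},r-u}\,\Bigl(2\bigl(\tfrac{p}{10}\bigr)^{10C\ell}\Bigr)^{u}.
\]
Now compare the $u$-th term with the claimed bound for $r$:
\begin{itemize}
\item $\binom r2-\binom{r-u}2\le ur\le C\ell u$.
\item $\binom r3-\binom{r-u}3\le ur^2/2$, which shifts the cubic term by only $O(u\ell/D)$.
\end{itemize}
So the term indexed by $u$ is at most $\bigl(2r\,p^{-C\ell}e^{O(\ell/D)}(p/10)^{10C\ell}\bigr)^{u}\le e^{-\ell u}$ times the claimed bound for $r$. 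The terms with $u\ge1$ therefore cost only a factor $1+O(e^{-\ell})$, which is absorbed into the error term. The blue case is identical, using $(1-p)^{-ur}\le 2^{C\ell u}$; there the cubic and $K$ terms only help when $r$ drops to $r-u$.
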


The proof of Proposition~\ref{prop:perfect_bound} proceeds via a reverse induction over the columns of the Bartlett matrix, which we now describe. 
To analyze the inner products between the Gaussian vectors, it is convenient to rotate them so that they become lower-triangular. 
This is achieved by the Bartlett decomposition, which is essentially the Gram-Schmidt process applied to the sequence.

\begin{definition}[Bartlett vectors]
For \(1\le i\le r\le d\), define \(\by_i\in\RR^d\) as follows:

\smallskip
(1) the first \(i-1\) coordinates are independent \(\N(0,1/d)\);

\smallskip
(2) the \(i\)-th coordinate is \(\sqrt{\frac1d\chi_{d-i+1}^2}\), where $\chi_k^2$ denotes the sum of squares of $k$ one-dimensional standard normal random variables;

\smallskip
(3) all further coordinates are zero.
\end{definition}

The Bartlett decomposition preserves the joint distribution of the inner products; see \cite[Lemma 4.2]{HMS}. More precisely, we have the following representation. For a Bartlett vector \(\by_i\), we denote its \(j\)-th coordinate by \(y_i(j)\).
\begin{lemma}[Bartlett representation \cite{HMS}]\label{lem:bartlett}
Let $d\geq r\geq 1$ be positive integers, let $\by_1, \dots, \by_r$ be sampled as above, and let $\bx_1, \dots, \bx_r\sim \mathcal{N}(0, \frac{1}{d}I_d)$ be independent.
The joint distribution of inner products \(\{\langle\bx_i,\bx_j\rangle\}_{1\le i,j\le r}\) is the same as that of \(\{\langle\by_i,\by_j\rangle\}_{1\le i,j\le r}\).

In particular, if $G'$ is a random graph on the vertex set $[r]$ where the vertices \(i\) and \(j\) are adjacent if $\langle \by_i, \by_j\rangle\geq -\frac{c_p}{\sqrt{d}}$, then $G'$ follows the same distribution as the random graph $G(r, d, p)$.
\end{lemma}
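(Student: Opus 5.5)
This is the Bartlett decomposition, so the plan is to realize the Gram–Schmidt (QR) factorization of the Gaussian matrix explicitly and check the laws of its entries. Let $X$ be the $d\times r$ matrix with columns $\bx_1,\dots,\bx_r$, and let $Y$ be the matrix with columns $\by_1,\dots,\by_r$.

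The reduction comes first. The family $\{\langle\bx_i,\bx_j\rangle\}$ is exactly the entries of the Gram matrix $X^\top X$. If $Q$ is any orthogonal matrix, possibly random and depending on $X$, then $(QX)^\top(QX)=X^\top X$. It therefore suffices to construct an orthogonal $Q=Q(X)$ such that $QX$ has the same joint law as $Y$. The graph statement then follows at once, because adjacency in $G(r,d,p)$ and in $G'$ is a fixed function of the inner products.

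The construction is by induction on the columns, with an independent Haar rotation at each stage.

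\begin{enumerate}[(1)]
\item Rotate $\bx_1$ onto the first coordinate axis using a reflection that depends only on $\bx_1$. The rotated vector is $(\|\bx_1\|,0,\dots,0)$, and $\|\bx_1\|^2\sim\frac1d\chi^2_d$, which matches $\by_1$.

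\item The remaining vectors $\bx_2,\dots,\bx_r$ are independent of $\bx_1$. Their law $\N(0,\frac1d I_d)$ is rotation invariant, so after applying this $\bx_1$-measurable orthogonal map they are still i.i.d.\ $\N(0,\frac1d I_d)$ and independent of the first column.

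\item At stage $i$, assume the first $i-1$ vectors already lie in $\mathrm{span}(e_1,\dots,e_{i-1})$ with the Bartlett law. Take the current $\bx_i\sim\N(0,\frac1d I_d)$, independent of them.
  \begin{itemize}
  \item Its first $i-1$ coordinates are i.i.d.\ $\N(0,1/d)$.
  \item Its remaining $d-i+1$ coordinates form an independent $\N(0,\frac1d I_{d-i+1})$ vector.
  \end{itemize}

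\item Apply an orthogonal map acting only on $\mathrm{span}(e_i,\dots,e_d)$ that sends this tail to $\|\text{tail}\|e_i$. This map fixes $e_1,\dots,e_{i-1}$, so it leaves the earlier vectors untouched.

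\item The new $i$-th coordinate has law $\sqrt{\frac1d\chi^2_{d-i+1}}$. It is independent of the first $i-1$ coordinates of $\bx_i$ and of the earlier columns.

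\item By rotation invariance in the tail subspace, the later vectors $\bx_{i+1},\dots,\bx_r$ remain i.i.d.\ standard Gaussian after this map and independent of everything so far.
\end{enumerate}

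Composing these $r$ maps gives $Q$. By induction, the columns of $QX$ are independent, and each has exactly the coordinate law prescribed for $\by_i$. Hence $QX\overset{d}{=}Y$.

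The only delicate point is the independence bookkeeping in steps (2), (5) and (6). It uses the fact that if $U$ is orthogonal and measurable with respect to variables independent of a standard Gaussian vector $g$, then $Ug\overset{d}{=}g$ and $Ug$ is independent of those variables. This follows by conditioning on $U$.

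We also need $d\ge r$ so that each tail space $\mathrm{span}(e_i,\dots,e_d)$ is nonempty. Degenerate zero tails occur with probability $0$ and can be ignored.
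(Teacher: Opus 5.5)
Your proof is correct. Composing the successive reflections gives an orthogonal $Q(X)$ with $(QX)^\top QX=X^\top X$, and the law of $QX$ equals that of the Bartlett matrix; you handle the independence bookkeeping correctly by conditioning on the earlier columns. This is the same Gram--Schmidt/QR approach the paper relies on: it gives no proof of its own, citing the HMS lemma and describing the Bartlett decomposition as ``essentially the Gram--Schmidt process applied to the sequence.''
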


We arrange \(\by_1,\ldots,\by_r\) as rows of a lower-triangular matrix \(M\). 
For \(1\le s\le r\), let \(M[s]\) denote the first \(s\) columns. 
The key observation is that if we fix the first \(s\) columns, then the remaining columns are independent and their distributions are simple to describe.
For \(s< i<j\le r\), define
\[
E_{ij}=\{\langle\by_i,\by_j\rangle\ge -c_p/\sqrt d\},\quad \text{and} \quad
\overline{E}_{ij}\text{ its complement},
\]
\[
C_s=\bigwedge_{s< i<j\le r}E_{ij},\quad \text{and} \quad
I_s=\bigwedge_{s< i<j\le r}\overline{E}_{ij}.
\]
Thus \(C_s\) is the event that the vertices \(\{s+1,\ldots,r\}\) form a blue clique, and \(I_s\) is the event that they form a red clique (an independent set).
Finally, let $B_r$ be the event that $\by_1,\dots,\by_r$ form a perfect sequence. In particular, under $B_r$ we have $\|\pi_{i}(\by_{i+1})\|_2\le \frac{\alpha\sqrt\ell}{\sqrt d}$ for all $1\le i\le r-1$.

A key ingredient in the induction is the following bound on connection probabilities \cite{HMS}, which follows from Lemma~\ref{lem:trunc_exp} and the perfect sequence estimates.

\begin{claim}[Connection probabilities \cite{HMS}]\label{claim:connection}
Given $M[s-1]$, the probability (over the random choice of $M_s$) that vertex $s$ is connected to all of $s+1,\dots,r$ satisfies
\[
\mathbb{P}\bigg[\bigwedge_{i=s+1}^r E_{s,i}\;\bigg|\; M[s-1]\bigg]\le (1-p)^{r-s}\exp\bigg(\frac{a\sqrt{d}}{1-p}\sum_{i=s+1}^r\langle\pi_{s-1}(\by_i),\pi_{s-1}(\by_s)\rangle+O((r-s)\delta)\bigg),
\]
and the probability that $s$ is not connected to any of $s+1,\dots,r$ satisfies
\[
\mathbb{P}\bigg[\bigwedge_{i=s+1}^r \overline{E}_{s,i}\;\bigg|\; M[s-1]\bigg]\le p^{r-s}\exp\bigg(-\frac{a\sqrt{d}}{p}\sum_{i=s+1}^r\langle\pi_{s-1}(\by_i),\pi_{s-1}(\by_s)\rangle+O((r-s)\delta)\bigg).
\]
\end{claim}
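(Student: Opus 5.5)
Fix $M[s-1]$, i.e.\ the first $s-1$ coordinates of every Bartlett vector. Then the only randomness in the $s$-th column is $y_s(s)=\sqrt{\chi^2_{d-s+1}/d}$ together with the independent coordinates $y_i(s)\sim\N(0,1/d)$ for $i>s$. Since $\by_s$ vanishes beyond coordinate $s$, for every $i>s$ we have
\[
\langle \by_s,\by_i\rangle=\langle\pi_{s-1}(\by_s),\pi_{s-1}(\by_i)\rangle + y_s(s)\,y_i(s),
\]
where $\pi_{s-1}$ here means restriction to the first $s-1$ coordinates, which coincides with projection onto $\operatorname{span}\{\by_1,\dots,\by_{s-1}\}$ in the Bartlett frame. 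Writing $u_i:=\langle\pi_{s-1}(\by_s),\pi_{s-1}(\by_i)\rangle$, the event $E_{s,i}$ is $y_s(s)y_i(s)\ge -c_p/\sqrt d-u_i$. The plan is to condition additionally on $y_s(s)$. Given $y_s(s)$, the coordinates $y_i(s)$, $i=s+1,\dots,r$, are independent, so the conditional probability factorizes into a product over $i$.

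First, I will control $y_s(s)$. On the perfect-sequence event (or by Lemma~\ref{lem:norm}), $y_s(s)^2=\|\by_s\|^2-\|\pi_{s-1}\by_s\|^2\in(1-O(\delta),1+O(\delta))$, because $\|\pi_{s-1}\by_s\|^2\le\alpha^2\ell/d=O(1/D^2)$, which is also $O(\delta)$ for $d=D^2\ell^2$. The complementary event has exponentially small probability and can be absorbed into the error. I will therefore treat $y_s(s)=1+O(\delta)$.

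Next comes the single-factor estimate. With $Z=\sqrt d\,y_i(s)\sim\N(0,1)$, the event $E_{s,i}$ becomes $Z\ge (-c_p-\sqrt d\,u_i)/y_s(s)=-c_p-\sqrt d\,u_i+O(\delta)$. For the last equality I use that $\sqrt d\,|u_i|\le \sqrt d\,\alpha^2\ell/d=O(1/D)$ is bounded, together with $c_p=O(1)$. The probability of this event is $\Phi(c_p+\sqrt d\,u_i+O(\delta))$. Applying log-concavity to $1-\Phi(\cdot)$, or equivalently to $\Phi$ at the point $-c_p$, expanded around $c_p$, gives
\[
\Phi(c_p+\epsilon)\le(1-p)\exp\!\Big(\epsilon\,\tfrac{\phi(c_p)}{1-p}\Big),\qquad \epsilon=\sqrt d\,u_i+O(\delta).
\]
Since $\Phi$ is log-concave, $\log\Phi(c_p+\epsilon)\le\log\Phi(c_p)+\epsilon\,\phi(c_p)/\Phi(c_p)$ holds for all $\epsilon$, and $\Phi(c_p)=1-p$. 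This yields the factor $(1-p)\exp\big(\frac{a\sqrt d}{1-p}u_i+O(\delta)\big)$. Similarly, $\overline E_{s,i}$ has probability $\Phi(-c_p-\epsilon)\le p\exp(-\epsilon\,a/p)$, by the displayed log-concavity inequality at $t=-c_p$ with $\phi(-c_p)/\Phi(-c_p)=a/p$. Multiplying over the $r-s$ indices and then integrating over $y_s(s)$ gives both bounds.

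The main obstacle is making the $O(\delta)$ uniform. The error is $\epsilon$-dependent through the factor $(\sqrt d\,u_i+c_p)\cdot O(\delta)$, so I need $\sqrt d\,|u_i|$ bounded uniformly, which is exactly what the perfectness bound on the projections gives. I also need to handle the atypical values of $y_s(s)$. For these I will bound the probability crudely by $1$ and note that $\PP[|y_s(s)-1|>\delta]\le 2e^{-\delta^2 d/10}=2e^{-\alpha^2\sqrt d/10}$, which is far smaller than $p^{r-s}$ times the main term. It can therefore be absorbed by enlarging the $O((r-s)\delta)$ constant, or alternatively the statement is read on $B_r$ as in the paper.
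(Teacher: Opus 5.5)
Your proposal is correct and is essentially the argument behind the cited HMS claim. You condition on $y_s(s)$, factor over the independent $y_i(s)$, and apply the tangent-line bound $\Phi(t+\epsilon)\le\Phi(t)e^{\epsilon\phi(t)/\Phi(t)}$ at $t=\pm c_p$, with perfectness giving $\sqrt d\,|u_i|=O(1/D)$ and $y_s(s)=1+O(\delta)$.

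One small slip needs fixing. The correct bound is $\|\pi_{s-1}(\by_s)\|_2^2\le\alpha^2\ell/d=\alpha^2/(D^2\ell)$, and it is the factor $1/\ell$ that makes this $O(\delta)$. A bound of only $O(D^{-2})$ is not $O(\delta)$ as $\ell\to\infty$, and a per-factor error of that size would accumulate to an $O(D^{-2})$ change in the normalized exponent, which is the same order as the paper's gain.
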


With these preparations, we can state the core inductive statement, which is proved by reverse induction on $s$; see \cite[Proposition 4.4]{HMS}.

\begin{proposition}[Bartlett induction \cite{HMS}]\label{prop:induction}
Fix the first \(s\) columns \(M[s]\). Then
\[
\PP[I_s\wedge B_r\mid M[s]]\le p^{\binom{r-s}{2}}\exp\left(\begin{aligned}
&-\frac{a\sqrt d}{p}\sum_{s<i<j\le r}\langle\pi_s(\by_i),\pi_s(\by_j)\rangle \\
&\quad -\frac{a^3}{p^3\sqrt d}\binom{r-s}{3}+O\Bigl(\frac{(r-s)^4}{d}+(r-s)\Bigr)
\end{aligned}\right),
\]
and similarly for \(C_s\wedge B_r\) with \(p\) replaced by \(1-p\) and the sign of the linear term flipped.
\end{proposition}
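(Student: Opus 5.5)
Reverse induction on $s$, from $s=r$ down to $s=0$. The base cases $s=r$ and $s=r-1$ are immediate: $I_s$ is then an empty conjunction, and the linear and cubic terms vanish or are absorbed into the $O(r-s)$ error. For the inductive step, fix $M[s-1]$ and write the event as $I_{s-1}=I_s\wedge\bigwedge_{i>s}\overline E_{s,i}$. We average over the $s$-th column $M_s$, i.e.\ over $y_s(s)$ and the entries $y_i(s)$ for $i>s$. Conditionally on $M[s-1]$, the entries $y_i(s)$ for $i>s$ are independent $\N(0,1/d)$, and $y_s(s)$ is a norm-type coordinate which, under $B_r$, equals $1+O(\delta)$.

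First we condition on vertex $s$ being non-adjacent to all later vertices. Each event $\overline E_{s,i}$ depends only on $y_i(s)$, through $\langle\by_s,\by_i\rangle=\langle\pi_{s-1}\by_s,\pi_{s-1}\by_i\rangle+y_s(s)y_i(s)$. By Claim~\ref{claim:connection}, its probability is at most $p^{r-s}\exp\bigl(-\frac{a\sqrt d}{p}\sum_{i>s}\langle\pi_{s-1}\by_i,\pi_{s-1}\by_s\rangle+O((r-s)\delta)\bigr)$. Conditioned on these events, the $y_i(s)$ are independent upper-truncated Gaussians $X_i$ with $X_i\le(-c_p-\sqrt d\langle\pi_{s-1}\by_s,\pi_{s-1}\by_i\rangle)/(y_s(s)\sqrt d)$. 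By Lemma~\ref{lem:trunc_exp} and the perfectness bound $\|\pi_{s-1}\by_i\|\le\alpha\sqrt\ell/\sqrt d$, their means are $\EE X_i=-\frac{a}{p\sqrt d}+\frac{a^2}{p^2}\cdot\epsilon_i+O(\cdot)$, where $\epsilon_i\approx \langle\pi_{s-1}\by_s,\pi_{s-1}\by_i\rangle$ is the shift of the threshold. Here we use $\frac{d}{db}\bigl(-\phi(b)/\Phi(b)\bigr)$ evaluated at $b=-c_p$ together with $\phi(c_p)/p=a/p$.

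Next we apply the induction hypothesis for $s$ inside this conditioning. It gives the factor $p^{\binom{r-s}2}\exp(-\frac{a^3}{p^3\sqrt d}\binom{r-s}{3}+O(\cdot))$ times $\EE[e^{\lambda S}]$ over the truncated variables, where $\lambda=-\frac{a\sqrt d}{p}$ and
\[
\sum_{s<i<j}\langle\pi_s\by_i,\pi_s\by_j\rangle=\sum_{s<i<j}\langle\pi_{s-1}\by_i,\pi_{s-1}\by_j\rangle+S,\qquad S=\sum_{s<i<j}X_iX_j .
\]
The truncated variables have variance proxy at most $1/d$ (by \cite{Barreto}), and $d=D^2\ell^2\ge4|\lambda|(r-s)$ holds for large $D$, so Lemma~\ref{lem:quadratic} bounds $\EE e^{\lambda S}$ by $\exp(\lambda\EE S+\frac{\lambda^2k^2}{d}\sum(\EE X_j)^2+O(k/\sqrt d))$ with $k=r-s$. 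The error terms are $\lambda^2k^3/d^2=O(k^3/d)$, which fits in $O(k^4/d+k)$, and $4|\lambda|k/d=O(k/\sqrt d)$. The main term is $\lambda\EE S=\lambda\sum_{i<j}\EE X_i\EE X_j$.

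Finally we expand $\EE X_i\EE X_j$. The constant part gives $-\frac{a\sqrt d}{p}\binom{k}{2}\frac{a^2}{p^2 d}=-\frac{a^3}{p^3\sqrt d}\binom k2$, and $\binom{k}{3}+\binom k2=\binom{k+1}{3}$ produces exactly the new cubic term. The cross terms contain $\epsilon_i$, i.e.\ $\langle\pi_{s-1}\by_s,\pi_{s-1}\by_i\rangle$ times $O(k/\sqrt d)\cdot\sqrt d\,a^3/p^3$. These are at most $O(k\cdot k\cdot\alpha^2\ell/d)$, which is absorbed into $O(k^4/d+k)$ since $d=D^2\ell^2$. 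The claim's linear term in $\langle\pi_{s-1}\by_s,\pi_{s-1}\by_i\rangle$ combines with the old linear sum to give $\sum_{s-1<i<j}\langle\pi_{s-1}\by_i,\pi_{s-1}\by_j\rangle$. The powers of $p$ also match: $p^{k}p^{\binom k2}=p^{\binom{k+1}2}$.

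The main obstacle is the error accounting. We must check that the accumulated errors $O(k\delta)$, the Lemma~\ref{lem:trunc_exp} shift errors, and the $y_s(s)\ne1$ corrections stay within $O((r-s)^4/d+(r-s))$ at each step rather than compounding across steps; the form $O(k^4/d+k)$ telescopes because it is evaluated at $k$, not summed. The blue case is identical with $p\to1-p$, the lower truncation, and the opposite sign of $\lambda$.
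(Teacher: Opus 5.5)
Your skeleton is the standard one and matches the paper's template, namely the proof of Lemma~\ref{lem:refined_induction_step} with Lemma~\ref{lem:quadratic} in place of Lemma~\ref{lem:refined_S}. You condition on column $s$ and on vertex $s$ having no edges to $\{s+1,\dots,r\}$, bound that probability by Claim~\ref{claim:connection}, split $\Sigma_s$ into its $\pi_{s-1}$-part plus $S$, control $\EE e^{\lambda S}$ with Lemma~\ref{lem:quadratic}, and use $\binom{k}{3}+\binom{k}{2}=\binom{k+1}{3}$. The gap is the error bookkeeping in your last paragraph, and it is not cosmetic. The claim that $O(k^4/d+k)$ ``telescopes because it is evaluated at $k$, not summed'' is false. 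The hypothesis at level $k=r-s$ already carries its whole accumulated error $K(k^4/d+k)$, and the step to $s-1$ adds a fresh error $E_k$ on top. To get back $K((k+1)^4/d+(k+1))$ with the same $K$, you need $E_k\le K\bigl(((k+1)^4-k^4)/d+1\bigr)$, i.e.\ $E_k=O(k^3/d+1)$. Knowing only $E_k=O(k^4/d+k)$ lets the constant grow at every step. The one-step errors genuinely add up, to $O((r-s)^5/d+(r-s)^2)$. At $r-s=\Theta(\ell)$ this is $\Theta(\ell^3/D^2+\ell^2)$, which wipes out the $\Theta(\ell^2/D)$ cubic term and is comparable to the main term $\binom{r}{2}\log(1/p)$.

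Measured against the correct budget, your one-step errors split in two. The variance term $\frac{\lambda^2k^2}{d}\sum_j(\EE X_j)^2=O(k^3/d)$ and the term $4|\lambda|k/d=O(k/\sqrt d)=O(1)$ are fine; they are exactly what produces the $(r-s)^4/d$ and $(r-s)$ terms. The other two are not $O(k^3/d+1)$ when $D\sqrt\ell\ll k\ll\ell$:
the cross terms in $\lambda\EE S$, which you bound by $O(k\cdot k\cdot\alpha^2\ell/d)=O(k^2/(D\sqrt d))$, and
the $O(k\delta)$ coming from Claim~\ref{claim:connection} and from $y_s(s)\ne1$.
For instance, at $k=\ell^{3/4}$ you get $k^2/(D\sqrt d)=\ell^{1/2}/D^2$ against $k^3/d=\ell^{1/4}/D^2$, and at $k=\ell^{2/3}$ you get $k\delta\asymp\ell^{1/6}$. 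Summed over the steps they contribute $O\bigl((r-s)^3/(D\sqrt d)+(r-s)^2\delta\bigr)$, which is not $O((r-s)^4/d+(r-s))$ for intermediate $r-s$.

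The way to close the induction is what the paper does in Lemma~\ref{lem:refined_induction_step}. Carry the accumulated remainder explicitly as a sum $\sum_m O(\mathcal E_{r-m})$ of one-step errors, using $\mu_i=-\frac{a}{p\sqrt d}+O(\frac{1}{D\sqrt d})$ for the cross terms, and sum at the end. This gives an error $O\bigl(\frac{(r-s)^3}{D\sqrt d}+(r-s)+(r-s)^2\delta\bigr)$, as in Proposition~\ref{prop:improved_ind}; note that $(r-s)^4/d\le C(r-s)^3/(D\sqrt d)$ is absorbed. That suffices downstream, since at $r-s=\Theta(\ell)$ it is $O(\ell^2/D^2)$ for large $\ell$, below the cubic term. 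It is not, however, the literal error term in the statement, and your step bounds do not deliver that term. A minor slip: $\frac{d}{db}\bigl(-\phi(b)/\Phi(b)\bigr)$ at $b=-c_p$ equals $a^2/p^2-c_pa/p=\gamma(p)$, not $a^2/p^2$, though only its size matters.
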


In the following sections, we will refine the key estimate used in the induction step: the bound on the exponential moment of the quadratic form \(\sum_{s<i<j\le r} y_i(s) y_j(s)\) that appears inside the expectation when moving from $s-1$ to $s$.

\section{Refined cumulant generating function for truncated Gaussians}
\label{sec:cgf}

In this section we prove the key technical improvement over the HMS analysis. 
Recall that in the induction step, after conditioning on the event \(\bigwedge_{i=s+1}^r\overline{E}_{s,i}\) (i.e., that vertex \(s\) is not connected to any later vertex), the variables \(y_i(s)\) become independent upper-truncated Gaussians. 
The HMS proof used a subgaussian bound to control the exponential moment of the quadratic form $\sum_{s<i<j\le r}y_i(s)y_j(s)$.  
Replacing it with a sharp cumulant generating function bound (Lemma~\ref{lem:CGF}), which exploits the fact that conditioning strictly reduces the variance, yields an improved estimate.

We begin by studying the variance of a standard normal truncated from above.

\begin{lemma}[Monotonicity of the truncated variance]
\label{lem:truncated_variance_monotone}
Let \(Z\sim \mathcal{N}(0,1)\), and set
\[
V(b):=\operatorname{Var}(Z\mid Z\le b), \quad \text{where} \quad b\in \mathbb{R}.
\]
Then \(V(b)\) is increasing in \(b\), and \(V(b)<1\) for all \(b\le 0\). 
In particular, for \(b=-c_p\) we set
\[
\gamma(p):=1-V(-c_p)>0.
\]
\end{lemma}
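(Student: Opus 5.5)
We write the truncated moments explicitly in terms of the inverse Mills ratio. Set $m(b):=\phi(b)/\Phi(b)$. Using $\phi'(t)=-t\phi(t)$ and integrating by parts, we get
\[
\EE[Z\mid Z\le b]=-m(b),\qquad \EE[Z^2\mid Z\le b]=1-b\,m(b).
\]
Hence
\[
V(b)=1-b\,m(b)-m(b)^2=1-m(b)\bigl(b+m(b)\bigr).
\]
Note that $m(b)>0$ for all $b$. The Mills ratio inequality quoted in Section~\ref{sec:preliminaries} gives $m(b)\ge |b|=-b$ for $b<0$, so $b+m(b)\ge 0$. We need the strict version $b+m(b)>0$ for every real $b$. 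For $b\ge 0$ this is trivial. For $b<0$ it is the strict inequality $\Phi(b)<\phi(b)/|b|$, which follows from
\[
\Phi(b)=\int_{-\infty}^b\phi(t)\,dt<\int_{-\infty}^b\frac{|t|}{|b|}\phi(t)\,dt=\frac{\phi(b)}{|b|}.
\]
Consequently $V(b)<1$ for every real $b$, and in particular for $b\le 0$. Since $c_p>0$ when $p<1/2$, it follows that $\gamma(p)=1-V(-c_p)>0$.

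For monotonicity, differentiate $V$. We have $m'(b)=-m(b)\bigl(b+m(b)\bigr)$, which follows from $\phi'=-b\phi$ and $\Phi'=\phi$. Then
\[
V'(b)=-\bigl[m'(b)(b+m(b))+m(b)(1+m'(b))\bigr]=m(b)\Bigl[(b+m(b))^2-1+m(b)(b+m(b))\Bigr].
\]
Writing $u=b+m(b)>0$, we get $V'(b)=m(b)\bigl[u(2m(b)+b)-1\bigr]$. So $V'(b)>0$ is equivalent to
\[
(b+m)(b+2m)>1,\qquad\text{i.e.}\qquad 2m^2+3bm+b^2-1>0.
\]

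This inequality is the main obstacle. It is a classical sharpening of the Mills ratio bound. We do not want to rely on a citation, so we derive it from log-concavity, which the paper already invokes. The variance of the truncated variable is $V(b)=(\log\Phi)''(b)+1$. Indeed, $(\log\Phi)'=m$, and $m'=-m(b+m)$, so $1+m'=V$. Thus $V'(b)=(\log\Phi)'''(b)$, and we need $(\log\Phi)'''>0$.

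The cleanest route goes through the Brascamp–Lieb/Prékopa fact that truncating a log-concave density to a larger half-line increases the variance. A direct argument works as follows. Let $X_b=Z\mid Z\le b$. For $b'>b$, the law of $X_{b'}$ is obtained from that of $X_b$ by adding mass on $(b,b']$, that is, on a region to the right of the entire old support. Dispersion should therefore increase, but this fails in general for arbitrary mixtures, so a genuine argument is needed.

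Instead we verify $2m^2+3bm+b^2-1>0$ directly. For $b\ge 0$, write $m\ge$ something positive. Since $m>0$ and $b\ge 0$, it suffices to have $2m^2+b^2>1$ whenever $3bm$ does not already suffice. Concretely, for $b\ge 1$ the inequality holds since $b^2\ge 1$, and for $0\le b<1$ we have $m(b)\ge\phi(1)/1\approx 0.24$ and $3bm+2m^2+b^2$; this case is handled by a short numerical check, or by the general argument below.

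For $b<0$, set $t=-b>0$. The quadratic $2m^2-3tm+t^2-1$ has its larger root at
\[
m_+=\frac{3t+\sqrt{t^2+8}}{4},
\]
so we need $m(-t)>m_+$. This is exactly Sampford's 1953 bound on the Mills ratio. To prove it, set $R(t)=1/m(-t)=\bar\Phi(t)/\phi(t)$, which satisfies the ODE $R'=tR-1$. We then show by a comparison (Sturm-type) argument that $R<4/(3t+\sqrt{t^2+8})$: the difference $h=R-g$, with $g$ the right-hand side, satisfies $h'<th$ wherever $h=0$, and $h\to 0$ as $t\to\infty$.

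If this ODE comparison becomes messy, the fallback is to cite the known result that $(\log\Phi)'''>0$ (Sampford). With $V'>0$ established everywhere, $V$ is strictly increasing, and together with the strict bound $V(b)<1$ for $b\le 0$ this completes the proof.
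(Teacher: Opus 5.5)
Your reduction is correct and is the paper's in different notation: $V'(b)=m\bigl[(b+m)(b+2m)-1\bigr]$ is exactly $V'(b)=m\bigl((b+m)^2-V(b)\bigr)$. Your strict proof of $V(b)<1$ is fine, and is in fact more careful than the paper, which quotes only the non-strict Mills bound. The gap is in the step you yourself identify as the main obstacle, and it has three parts.

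First, the comparison is stated with the wrong sign. Since $R'=tR-1$, one has $h'-th=tg-1-g'$ identically, so your condition $h'<th$ amounts to $tg-1-g'<0$. Then $\bigl(he^{-t^2/2}\bigr)'=(h'-th)e^{-t^2/2}<0$ together with $he^{-t^2/2}\to0$ would force $h>0$, i.e.\ $R>g$, which is the reverse of Sampford's bound. In fact $tg-1-g'=1/27>0$ at $t=1$. Read only at zeros of $h$, the condition does not help either: ``$h'<0$ at zeros'' and $h\to0$ are consistent with a positive $h$ decreasing to $0$.

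Second, the inequality that carries all the content, namely that $g$ is a strict subsolution $g'<tg-1$, is never verified.

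Third, the range $0\le b<1$ is not handled. The bound $m\ge\phi(1)\approx0.24$ only yields a lower bound of about $0.2$ for $2m^2+3bm+b^2$ at $b=0.1$. A ``numerical check'' is not a proof, and your ODE argument is set up only for $t>0$.

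All three points can be repaired along your own lines. Take $s=\sqrt{t^2+8}$ and $g=4/(3t+s)$. Clearing denominators and using $s^2-t^2=8$ gives
\[
(3t+s)^2\bigl(tg-1-g'\bigr)=\frac{4(s-t)^2}{s(s+t)}>0 .
\]
Hence $\bigl((R-g)e^{-t^2/2}\bigr)'>0$ wherever $3t+s>0$, i.e.\ on $(-1,\infty)$. Since $(R-g)e^{-t^2/2}\to0$ at $+\infty$, this gives $R<g$ on all of $(-1,\infty)$. Therefore $m(b)>m_+$ and $(b+m)(b+2m)>1$ for every $b<1$, while $b\ge1$ is your trivial case. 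This yields strict monotonicity without any citation.

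The paper takes a different route to the same inequality. It reads $V(b)\le(b+m)^2$ as $\operatorname{Var}(W_b)\le(\EE W_b)^2$ for $W_b=b-Z$ given $Z\le b$. It then obtains this from $\PP(W_b\ge x)\PP(W_b\ge y)\ge\PP(W_b\ge x+y)$, a consequence of log-concavity of $\Phi$, which integrates to $\EE W_b^2\le2(\EE W_b)^2$. That argument is uniform in $b$ and needs no case split or explicit Mills-ratio bound. It gives only $V'\ge0$, but that is all Lemma~\ref{lem:CGF} uses. Your repaired argument gives the sharper $V'>0$ at the cost of an explicit computation.
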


\begin{proof}
Recall that \(\phi(z)=\frac1{\sqrt{2\pi}}e^{-z^2/2}\) and \(\Phi(z)=\int_{-\infty}^z\phi(u)\,du\), and let
$
m(b)=\frac{\phi(b)}{\Phi(b)}
$
be the inverse Mills ratio.
By integration by parts, noting that $\phi'(z) = -z\phi(z)$, we have
\begin{align*}
\mathbb E[Z\mid Z\le b]
=
\frac{1}{\Phi(b)}\int_{-\infty}^b z\phi(z)\,dz
=
-\frac{\phi(b)}{\Phi(b)}
=-m(b).
\end{align*}
Moreover,
$
\int_{-\infty}^b z^2\phi(z)\,dz = \bigl[z \cdot (-\phi(z))\bigr]_{-\infty}^b - \int_{-\infty}^b (-\phi(z))\,dz 
=-b\phi(b) + \Phi(b).
$
Therefore,
\begin{align*}
\mathbb E[Z^2\mid Z\le b]
=
\frac{1}{\Phi(b)}\int_{-\infty}^b z^2\phi(z)\,dz
=
\frac{\Phi(b)-b\phi(b)}{\Phi(b)}
=
1-bm(b).
\end{align*}
Consequently,
$
V(b)=\mathbb E[Z^2\mid Z\le b]-(\mathbb E[Z\mid Z\le b])^2=1-bm(b)-m(b)^2.
$
Also, note that $\Phi'(b)=\phi(b)$ and $\phi'(b) = -b\phi(b)$, we have
\[
m'(b)=\frac{\phi'(b)\Phi(b)-\phi(b)\Phi'(b)}{\Phi^2(b)}=\frac{-b\phi(b)\Phi(b)-\phi^2(b)}{\Phi^2(b)}=-bm(b)-m(b)^2.
\]
Thus we obtain
$
V(b)=1+m'(b).
$
Therefore, 
\[
V'(b)=m''(b)
=-m(b)+(b+2m(b))m(b)(b+m(b)) =m(b)\left((b+m(b))^2-V(b)\right).
\]

It remains to show that the factor in parentheses is nonnegative. 
Let
\[
        W_b:=b-Z
\]
under the conditioning \(Z\le b\).  Then \(W_b\ge0\), and its density is
$f_b(w)=\frac{\phi(b-w)}{\Phi(b)}$, where $w\ge0.$ For \(t\ge0\), we have
\[
        \mathbb P(W_b\ge t)
        =
        \int_t^\infty f_b(w)\,dw
        =
        \frac{\Phi(b-t)}{\Phi(b)}.
\]
Since \(\Phi\) is log-concave, for \(x,y\ge0\) we get
\[
        \frac{\Phi(b-x)}{\Phi(b)}
        \cdot
        \frac{\Phi(b-y)}{\Phi(b)}
        \ge
        \frac{\Phi(b-x-y)}{\Phi(b)}.
\]
Indeed, this follows by applying log-concavity of \(\Phi\) to the points
\(b\) and \(b-x-y\).  Using the standard identities for nonnegative
random variables,
\[
        \mathbb E W_b
        =
        \int_0^\infty \mathbb P(W_b\ge t)\,dt,
        \quad
        \mathbb E W_b^2
        =
        2\int_0^\infty t\,\mathbb P(W_b\ge t)\,dt,
\]
we obtain
\[
\begin{aligned}
        (\mathbb E W_b)^2
        &=
        \int_0^\infty\int_0^\infty
        \frac{\Phi(b-x)}{\Phi(b)}
        \frac{\Phi(b-y)}{\Phi(b)}
        \,dx\,dy                                      \\
        &\ge
        \int_0^\infty\int_0^\infty
        \frac{\Phi(b-x-y)}{\Phi(b)}
        \,dx\,dy                                      =
        \int_0^\infty
        t\frac{\Phi(b-t)}{\Phi(b)}\,dt
        =
        \frac12\mathbb E W_b^2 .
\end{aligned}
\]
Therefore
\[
        \operatorname{Var}(W_b)
        =
        \mathbb E W_b^2-(\mathbb E W_b)^2
        \le
        (\mathbb E W_b)^2.
\]
Since
\[
        \operatorname{Var}(W_b)=V(b),
        \quad
        \mathbb E W_b=b-\mathbb E[Z\mid Z\le b]=b+m(b),
\]
we obtain
\[
        V(b)\le (b+m(b))^2.
\]
Together with \(m(b)>0\), the formula for \(V'(b)\) gives \(V'(b)\ge 0\).

Moreover, if \(b\le0\), then the Mills ratio bound gives \(m(b)>-b\). Thus \(V(b)<1\) by noting $ V(b)-1= -bm(b)-m(b)^2 =m(b)(-b-m(b))<0$.   This completes the proof.
\end{proof}

The next lemma provides the sharp bound on the cumulant generating function that will replace the subgaussian estimate. 
A crucial observation is that the centered truncated Gaussian has zero mean and its variance is strictly smaller than 1, which is the key to our improvement.

\begin{lemma}[Cumulant generating function bound]\label{lem:CGF}
Let \(Z\sim\mathcal N(0,1)\) and \(b\in\mathbb{R}\). Define \(Y=Z-\mathbb{E}[Z\mid Z\le b]\). 
Then \(\mathbb{E}[Y\mid Z\le b]=0\), \(\operatorname{Var}(Y\mid Z\le b)=V(b)\), and for every \(u>0\),
\[
\log\mathbb{E}[e^{uY}\mid Z\le b]\le\frac{u^2}{2}V(b), \;\; \text{where} \;\; V(b):=\operatorname{Var}(Z\mid Z\le b).
\]
\end{lemma}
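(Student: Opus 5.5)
The plan is to compute the cumulant generating function of $Y$ in closed form and then control its second derivative using Lemma~\ref{lem:truncated_variance_monotone}. The first two claims are immediate from the definitions. Since $\mathbb{E}[Z\mid Z\le b]=-m(b)$, where $m=\phi/\Phi$ is the inverse Mills ratio from the previous proof, we have $Y=Z+m(b)$. It is a deterministic shift of $Z$, so conditionally on $Z\le b$ it has mean zero and variance $V(b)$.

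For the exponential moment, I will complete the square in the Gaussian integral:
\[
\mathbb{E}[e^{uZ}\mid Z\le b]=\frac{1}{\Phi(b)}\int_{-\infty}^b e^{uz}\phi(z)\,dz=e^{u^2/2}\,\frac{\Phi(b-u)}{\Phi(b)}.
\]
This uses $e^{uz}\phi(z)=e^{u^2/2}\phi(z-u)$. Hence, for all real $u$,
\[
K(u):=\log\mathbb{E}[e^{uY}\mid Z\le b]=\frac{u^2}{2}+u\,m(b)+\log\Phi(b-u)-\log\Phi(b).
\]
Clearly $K(0)=0$. Differentiating, and using $(\log\Phi)'=m$, gives $K'(u)=u+m(b)-m(b-u)$, so $K'(0)=0$. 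Differentiating again gives
\[
K''(u)=1+m'(b-u)=V(b-u),
\]
by the identity $V=1+m'$ established in the proof of Lemma~\ref{lem:truncated_variance_monotone}. This identity is the key structural fact: the second derivative of the CGF at $u$ is exactly the truncated variance at the shifted threshold $b-u$.

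Taylor's formula with integral remainder then gives
\[
K(u)=\int_0^u (u-t)\,V(b-t)\,dt .
\]
For $u>0$ and $0\le t\le u$ we have $b-t\le b$. By the monotonicity part of Lemma~\ref{lem:truncated_variance_monotone}, this yields $V(b-t)\le V(b)$. Therefore
\[
K(u)\le V(b)\int_0^u(u-t)\,dt=\frac{u^2}{2}V(b),
\]
which is the claimed bound.

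The only genuinely nontrivial input is the monotonicity of $V$, which has already been proved. The remaining points to verify are the completing-the-square identity and the differentiation, where the sign from the chain rule in $\frac{d}{du}\log\Phi(b-u)=-m(b-u)$ must be tracked carefully. This sign bookkeeping is also why the restriction $u>0$ matters: for $u<0$ the shift $b-u$ exceeds $b$, and the inequality would go the wrong way.
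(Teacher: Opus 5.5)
Your proposal is correct and follows essentially the same route as the paper: the closed-form CGF $K(u)=\frac{u^2}{2}+u\,m(b)+\log\Phi(b-u)-\log\Phi(b)$, the identity $K''(u)=V(b-u)$ via $V=1+m'$, and the monotonicity of $V$ combined with $K(0)=K'(0)=0$. Your integral-remainder form $\int_0^u(u-t)V(b-t)\,dt$ is the paper's double integral $\int_0^u\int_0^v V(b-w)\,dw\,dv$ written differently, and you also verify the mean-zero and variance-$V(b)$ claims explicitly, which the paper leaves implicit.
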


\begin{proof}
We start by computing the conditional moment generating function of \(Z\):
\[
\EE[e^{uZ}\mid Z\le b]=\frac{1}{\Phi(b)}\int_{-\infty}^b e^{uz}\phi(z)\,dz=e^{u^2/2}\frac{\Phi(b-u)}{\Phi(b)},
\]
by noting \(e^{uz}\phi(z)=e^{uz}\frac1{\sqrt{2\pi}}e^{-z^2/2}=e^{u^2/2}\phi(z-u)\).

Since \(\EE[Z\mid Z\le b]=-m(b)\), we have \(Y=Z+m(b)\) and therefore
\[
\EE[e^{uY}\mid Z\le b]=e^{um(b)}\EE[e^{uZ}\mid Z\le b]=e^{um(b)}e^{u^2/2}\frac{\Phi(b-u)}{\Phi(b)}.
\]

Define \(K(u)=\log\EE[e^{uY}\mid Z\le b]\). Then
\[
K(u)=\frac{u^2}{2}+um(b)+\log\Phi(b-u)-\log\Phi(b).
\]
Differentiating twice with respect to \(u\) gives
\[
K'(u)=u+m(b)-\frac{\phi(b-u)}{\Phi(b-u)}=u+m(b)-m(b-u),\quad
K''(u)=1+m'(b-u)=V(b-u),
\]
where the last equality uses the identity \(V(t)=1+m'(t)\) from the proof of Lemma~\ref{lem:truncated_variance_monotone}.

Observe that \(K(0)=K'(0)=0\). 
Integrating twice, since \(V\) is increasing by Lemma~\ref{lem:truncated_variance_monotone}, we obtain
\[
K(u)=\int_0^u K'(v)\,dv=\int_0^u\int_0^v K''(w)\,dw\,dv=\int_0^u\int_0^v V(b-w)\,dw\,dv\le V(b)\frac{u^2}{2}.
\]

Recalling that \(K(u)=\log\mathbb E[e^{uY}\mid Z\le b]\), we have proved the claimed inequality.
\end{proof}

\section{Improved independent set probability}
\label{sec:improved}

In this section, we incorporate the refined cumulant generating function bound into the HMS induction to obtain a sharper estimate for \(P_{\mathrm{red},r}^*\). 

Fix an exposure step \(s\) and set \(k=r-s\). Condition on the first \(s-1\) columns \(M[s-1]\) and on the diagonal entry \(y_s(s)\). 
For \(i=s+1,\ldots,r\), let \(X_i=y_i(s)\) and
\[
\mu_i=\EE[X_i\mid\overline{E}_{s,i},M[s-1],y_s(s)].
\]

Let $Z\sim\mathcal N(0,1)$ and set $b = -c_p + \varepsilon$ with $\varepsilon = O(D^{-1})$. 
To evaluate $\mu_i$, we consider the conditional expectation $\mathbb E[Z/\sqrt d \mid Z \le b]$. 
By Lemma~\ref{lem:trunc_exp},
\[
\mathbb E[Z/\sqrt d \mid Z \le b] = -\frac{\phi(b)}{\Phi(b)\sqrt d}.
\]
Since $\Phi(-c_p)=p$ and $\phi(-c_p)=a$, a Taylor expansion of the inverse Mills ratio around $-c_p$ gives
\[
\frac{\phi(b)}{\Phi(b)} = \frac{a}{p} + O(\varepsilon) = \frac{a}{p} + O(D^{-1}).
\]
Consequently,
\[
\mathbb E[Z/\sqrt d \mid Z \le b] = -\frac{a}{p\sqrt d} + O\left(\frac{1}{D\sqrt d}\right).
\]

In our setting, the conditioning event $\overline{E}_{s,i}$ together with the perfect sequence condition implies that $y_i(s)$ is a centered Gaussian with variance $1/d$, truncated from above at $-\frac{c_p}{\sqrt d} + O(\frac{1}{D\sqrt d})$. Therefore, for each $i>s$,
\[
\mu_i = \mathbb E\bigl[y_i(s) \mid \overline{E}_{s,i}, M[s-1], y_s(s)\bigr] = -\frac{a}{p\sqrt d} + O\left(\frac{1}{D\sqrt d}\right)<0.
\]

Define the quadratic form
\[
S=\sum_{s<i<j\le r}X_iX_j.
\]

We now sharpen the bound on \(\EE[e^{\lambda S}]\) from \cite{HMS}, where \(\lambda=-a\sqrt d/p\).
\begin{lemma}[Refined exponential moment]\label{lem:refined_S}
Under the above conditioning, with \(\lambda=-a\sqrt d/p\) and for sufficiently large \(D\),
\[
\log\EE[e^{\lambda S}]\le\lambda\EE[S]+(1-\gamma(p)+O(D^{-1}))\frac{\lambda^2(k-1)^2}{2d}\sum_{i=s+1}^r\mu_i^2+\frac{4|\lambda|k}{d}.
\]
\end{lemma}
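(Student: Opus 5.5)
We will follow the proof of Lemma~\ref{lem:quadratic} from \cite{HMS}, but at the point where the subgaussian bound with variance proxy \(1/d\) is applied to a linear combination of the centred variables \(X_i-\mu_i\), we will apply Lemma~\ref{lem:CGF} instead. Write \(X_i=\mu_i+Y_i\), where under the conditioning the \(Y_i\) are independent and centred. Each \(\sqrt d\,X_i\) is a standard normal truncated above at \(b_i=-c_p+O(D^{-1})\). Expanding gives
\[
S=\sum_{i<j}\mu_i\mu_j+\sum_{i}Y_i\Bigl(\sum_{j\ne i}\mu_j\Bigr)+\sum_{i<j}Y_iY_j ,
\]
so \(\lambda S-\lambda\EE S=\lambda L+\lambda Q\). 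Here \(L=\sum_i c_iY_i\) with \(c_i=\sum_{j\ne i}\mu_j\) is the linear part, and \(Q=\sum_{i<j}Y_iY_j\) is the centred quadratic part. By Cauchy--Schwarz,
\[
\EE[e^{\lambda L+\lambda Q}]\le \EE[e^{2\lambda L}]^{1/2}\,\EE[e^{2\lambda Q}]^{1/2}.
\]
Alternatively, we will use a H\"older split with exponents \(1+\theta\) and \(1+\theta^{-1}\), where \(\theta\to0\) slowly so that only a \(1+O(D^{-1})\) loss appears in the linear term. This mirrors HMS, and we will reuse exactly their treatment of \(Q\), which produces the term \(4|\lambda|k/d\).

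\textbf{Linear term.} By independence, \(\log\EE[e^{t L}]=\sum_i\log\EE[e^{tc_iY_i}]\). Since \(\lambda<0\) and \(\mu_j<0\), every \(c_i>0\), so \(tc_i\) has a fixed sign. Apply Lemma~\ref{lem:CGF} to \(Y_i\sqrt d\) with \(u=|t|c_i/\sqrt d\) (after a sign check, see below). Each factor is then at most \(\exp\bigl(\tfrac{t^2c_i^2}{2d}V(b_i)\bigr)\). By Lemma~\ref{lem:truncated_variance_monotone} and smoothness of \(V\), we have \(V(b_i)=V(-c_p)+O(D^{-1})=1-\gamma(p)+O(D^{-1})\). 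Moreover \(\mu_j=\mu+O(1/(D\sqrt d))\) with a common \(\mu\), so \(c_i=(k-1)\mu\,(1+O(D^{-1}))\) and
\[
\sum_ic_i^2=(1+O(D^{-1}))(k-1)^2\sum_i\mu_i^2 .
\]
This yields the middle term of the lemma with coefficient \((1-\gamma(p)+O(D^{-1}))\lambda^2(k-1)^2/(2d)\), up to the H\"older factor \(1+\theta\).

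\textbf{Quadratic term.} For \(\EE[e^{t Q}]\) we use only that the \(Y_i\) are centred and subgaussian with proxy \(1/d\) (by \cite{Barreto}, since truncation does not increase the variance proxy). We then follow HMS: decouple or condition sequentially, and use Lemma~\ref{lem:subg_square} in the regime \(d\ge 4|\lambda|k\), which holds since \(|\lambda|k=O(\sqrt d\,\ell)\) and \(d=D^2\ell^2\). This gives the \(O(|\lambda|k/d)\) contribution. With the H\"older factor the constant becomes \(4(1+\theta^{-1})/\ldots\). Choosing \(\theta=D^{-1}\) keeps this contribution negligible, since \(|\lambda|k/d=O(1/D)\) per step, which is absorbed in the error budget \(O((r-s)^4/d+(r-s))\) of Proposition~\ref{prop:induction}. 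If the constant \(4\) must be matched exactly, we instead redo the HMS argument directly, conditioning on the quadratic part rather than using H\"older.

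\textbf{Main obstacle.} The main obstacle is the direction of Lemma~\ref{lem:CGF}, which is stated only for \(u>0\). Here \(\lambda<0\) and \(c_i>0\), so the exponent is \(\lambda c_iY_i\) with a negative coefficient. We need the bound for negative \(u\), that is, towards the lower tail of an upper-truncated Gaussian. For \(u<0\) the identity \(K''(u)=V(b-u)\) involves \(b-u>b\), where \(V\) is larger, so the bound \(\le V(b)u^2/2\) does not follow directly. We would try two fixes. First, exploit that \(|u|=|\lambda|c_i/\sqrt d=O(k/\sqrt d)=O(1/D)\) is small. Then \(K(u)\le \tfrac{u^2}{2}\sup_{|w|\le|u|}V(b+w)=\tfrac{u^2}{2}(V(b)+O(D^{-1}))\), which is exactly the permitted \(O(D^{-1})\) loss. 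Second, if some \(c_i\) were large, fall back on the subgaussian proxy. We expect the first fix to suffice and to be the point needing care.
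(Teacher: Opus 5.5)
Your plan is the paper's proof: write $X_i=\mu_i+\xi_i$, use H\"older with exponents $1+O(D^{-1})$ and $q'=O(D)$, bound the linear part with Lemma~\ref{lem:CGF}, and bound the centred quadratic part with Lemma~\ref{lem:quadratic} at $q'\lambda$, whose $1/q'$-th root returns exactly $4|\lambda|k/d$, so no constant is lost (note only that the hypothesis $d\ge 4q'|\lambda|k$ needs $q'\le pD\ell/(4ak)$, so $\theta=D^{-1}$ is too small and you should take $\theta=K_0/D$ with $K_0=K_0(p,C)$ large). Your ``main obstacle'' comes from a sign slip: $c_i=\sum_{j\ne i}\mu_j<0$, so $\lambda c_i>0$ and Lemma~\ref{lem:CGF} applies directly with $u=(1+\theta)\lambda c_i/\sqrt d>0$, the favourable direction in which $K''(w)=V(b-w)\le V(b)$; your small-$|u|$ fallback would also have worked, since $|u|=O(D^{-1})$.
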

\begin{proof}
Write \(X_i=\mu_i+\xi_i\), where \(\xi_i=X_i-\mu_i\) are centered. 
Expanding \(S\) gives
\[
S=\sum_{s<i<j\le r}\mu_i\mu_j+\sum_{i=s+1}^r A_i\xi_i+\sum_{s<i<j\le r}\xi_i\xi_j,
\]
where \(A_i=\sum\limits_{\substack{s<j\le r, j\ne i}}\mu_j<0\). The first term is deterministic, so it factors out of the expectation.

To handle the random terms, we apply H\"older's inequality. 
Choose exponents \(P_D=1+O(D^{-1})\) and \(Q_D=O(D)\) satisfying
\(P_D^{-1}+Q_D^{-1}=1\) and \(d\ge4Q_D|\lambda|k\). 
Then
\[
\EE[e^{\lambda S}]
\le
e^{\lambda\sum_{s<i<j\le r}\mu_i\mu_j}
\left(
\EE\left[
e^{P_D\lambda\sum_{i=s+1}^r A_i\xi_i}
\right]
\right)^{1/P_D}
\left(
\EE\left[
e^{Q_D\lambda\sum_{s<i<j\le r}\xi_i\xi_j}
\right]
\right)^{1/Q_D}.
\]

Under the conditioning in the lemma, the variables
\(\xi_{s+1},\ldots,\xi_r\) remain independent. For the linear term, conditional on $\overline{E}_{s,i}$ and the previous columns,
the scaled centered variable $\sqrt d\,\xi_i$ has the centered truncated
Gaussian distribution appearing in Lemma~\ref{lem:CGF}, with truncation
parameter \(b_i=-c_p+O(D^{-1})\). Hence $\mathbb{E}[\xi_i]=0$ and $\operatorname{Var}(\xi_i)=V(b_i)/d$.
Since \(\lambda<0\) and \(A_i<0\) for \(D\) sufficiently large, we have
\(P_D\lambda A_i/\sqrt d>0\). Applying Lemma~\ref{lem:CGF} with
\(u=P_D\lambda A_i/\sqrt d\) gives
\[
\log\mathbb{E}[e^{P_D\lambda A_i\xi_i}]
\le
\frac{P_D^2\lambda^2 A_i^2}{2d}\,V(b_i).
\]
Summing over \(i\) and using independence yields
\[
\frac1{P_D}
\log
\EE\left[
e^{P_D\lambda\sum_{i=s+1}^r A_i\xi_i}
\right]
\le
\frac{P_D\lambda^2}{2d}
\sum_{i=s+1}^r A_i^2V(b_i).
\]

Moreover, by Cauchy's inequality,
\[
A_i^2
=
\left(
\sum_{\substack{s<j\le r\\ j\ne i}}\mu_j
\right)^2
\le
(k-1)
\sum_{\substack{s<j\le r\\ j\ne i}}\mu_j^2 .
\]
Summing over \(i\) gives
\[
\sum_{i=s+1}^r A_i^2
\le
(k-1)^2
\sum_{i=s+1}^r\mu_i^2.
\]
Since
$
V(b_i)=V(-c_p)+O(D^{-1})
=
1-\gamma(p)+O(D^{-1})
$
and \(P_D=1+O(D^{-1})\), we obtain
\[
\frac1{P_D}
\log
\EE\left[
e^{P_D\lambda\sum_{i=s+1}^r A_i\xi_i}
\right]
\le
\bigl(1-\gamma(p)+O(D^{-1})\bigr)
\frac{\lambda^2(k-1)^2}{2d}
\sum_{i=s+1}^r\mu_i^2 .
\]

For the quadratic term, the \(\xi_i\) are independent and centered. 
Moreover, since each \(\xi_i\) is a truncated Gaussian, it is subgaussian with
variance proxy at most \(1/d\) (see \cite{Barreto}). Applying
Lemma~\ref{lem:quadratic} to the \(k=r-s\) variables
\(\xi_{s+1},\ldots,\xi_r\), with \(\lambda\) replaced by \(Q_D\lambda\), we obtain
\[
\begin{aligned}
\mathbb{E}\left[
e^{Q_D\lambda\sum_{s<i<j\le r}\xi_i\xi_j}
\right]
\le
\exp\left(
Q_D\lambda
\mathbb{E}\left[
\sum_{s<i<j\le r}\xi_i\xi_j
\right]
+
\frac{(Q_D\lambda)^2k^2}{d}
\sum_{i=s+1}^r(\mathbb{E}\xi_i)^2
+
\frac{4|Q_D\lambda|k}{d}
\right).
\end{aligned}
\]
Since \(\mathbb{E}[\xi_i]=0\) for every \(s<i\le r\), we have
$\mathbb{E}\left[
\sum_{s<i<j\le r}\xi_i\xi_j
\right]
=0$
and$
\sum_{i=s+1}^r(\mathbb{E}\xi_i)^2=0.$
Hence
\[
\mathbb{E}\left[
e^{Q_D\lambda\sum_{s<i<j\le r}\xi_i\xi_j}
\right]
\le
\exp\left(
\frac{4|Q_D\lambda|k}{d}
\right).
\]
This gives
$
\frac1{Q_D}
\log
\mathbb{E}\left[
e^{Q_D\lambda\sum_{s<i<j\le r}\xi_i\xi_j}
\right]
\le
\frac{4|\lambda|k}{d}.
$
Combining the three estimates proves the lemma.
\end{proof}

We now incorporate the refined cumulant generating function bound into the HMS induction. 
The following lemma isolates the reverse-induction step.  It is the point at which Lemma~\ref{lem:refined_S} replaces the subgaussian estimate used in the HMS argument.

To keep the induction formula readable, we introduce a few abbreviations.  For
\(0\le s\le r-1\), put
\[
\Sigma_s
:=
\sum_{s<i<j\le r}
\bigl\langle \pi_s(\by_i),\pi_s(\by_j)\bigr\rangle .
\]
We also write
\[
\beta_{\mathrm{quad}}(p):=\frac{\gamma(p)a^4}{8p^4}.
\]
For an integer \(t\ge1\), define the one-step error scale
\[
\mathcal E_t
:=
\frac{t^2}{D\sqrt d}
+
\frac{t^3}{Dd}
+
\frac{t}{\sqrt d}
+
t\delta .
\]
Finally, for \(0\le s\le r-1\), set
\[
\mathcal R_{s,r}
:=
\sum_{m=s+1}^{r-1}
\left[
\bigl(1-\gamma(p)\bigr)\frac{a^4}{2p^4d}(r-m)(r-m-1)^2
+
O\bigl(\mathcal E_{r-m}\bigr)
\right],
\]
where an empty sum is understood to be zero.

\begin{lemma}[Refined Bartlett induction step]\label{lem:refined_induction_step}
Fix the first \(s\) columns \(M[s]\) and let \(k = r-s\). Under the perfect sequence event \(B_r\), the conditional probability that vertices \(\{s+1,\dots,r\}\) form a red clique (independent set) satisfies
\[
\mathbb{P}[I_s \wedge B_r \mid M[s]]
\le
p^{\binom{k}{2}}
\exp\left(
-\frac{a\sqrt d}{p}\Sigma_s
-
\frac{a^3}{p^3\sqrt d}\binom{k}{3}
+
\mathcal R_{s,r}
\right).
\]
\end{lemma}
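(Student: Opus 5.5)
We prove the statement by reverse induction on \(s\), from \(s=r-1\) down to \(s=0\), exactly mirroring the structure of Proposition~\ref{prop:induction}, but with Lemma~\ref{lem:refined_S} substituted for Lemma~\ref{lem:quadratic} at the single place where the quadratic form in the \(s\)-th column is controlled.

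\emph{Base case.} For \(s=r-1\) we have \(k=1\), \(\binom{k}{2}=\binom{k}{3}=0\), \(\Sigma_{r-1}=0\) and \(\mathcal R_{r-1,r}=0\). The claimed bound is then the trivial bound \(1\).

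\emph{Inductive step.} Assume the statement for \(s\), and fix \(M[s-1]\), so that the target is now \(k=r-s+1\). Write \(I_{s-1}=I_s\wedge\bigwedge_{i>s}\overline E_{s,i}\). By the tower property,
\[
\PP[I_{s-1}\wedge B_r\mid M[s-1]]\le \EE\Bigl[\mathbf 1\bigl\{\textstyle\bigwedge_{i>s}\overline E_{s,i}\bigr\}\,\PP[I_s\wedge B_r\mid M[s]]\Bigm| M[s-1]\Bigr].
\]
Insert the inductive hypothesis for \(\PP[I_s\wedge B_r\mid M[s]]\), and use the decomposition
\[
\Sigma_s=\Sigma_{s-1}-\sum_{i<j}\langle\pi_{s-1}(\by_i),\pi_{s-1}(\by_j)\rangle+\Sigma_{s-1}+S,\qquad\text{i.e.}\qquad \Sigma_s=\Sigma_{s-1}+S,
\]
where \(S=\sum_{s<i<j\le r}y_i(s)y_j(s)\) is the new-column contribution. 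The only randomness left is in column \(s\). Condition further on \(y_s(s)\) and apply Claim~\ref{claim:connection} to the event \(\bigwedge_{i>s}\overline E_{s,i}\). This produces the factor \(p^{r-s}\) (note \(p^{\binom{r-s}{2}}p^{r-s}=p^{\binom{r-s+1}{2}}\)) together with the linear term \(-\frac{a\sqrt d}{p}\sum_{i>s}\langle\pi_{s-1}(\by_i),\pi_{s-1}(\by_s)\rangle\). Adding this to \(-\frac{a\sqrt d}{p}\Sigma_{s-1}\) gives exactly \(-\frac{a\sqrt d}{p}\) times the new \(\Sigma_{s-1}\) over pairs in \(\{s,\dots,r\}\). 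What remains is the conditional expectation \(\EE[e^{\lambda S}]\) with \(\lambda=-a\sqrt d/p\), under the conditioning described before Lemma~\ref{lem:refined_S}, and we bound it by that lemma with \(k=r-s\).

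\emph{Bookkeeping.} Substitute \(\mu_i=-\frac{a}{p\sqrt d}+O(\frac1{D\sqrt d})\) into the three terms of Lemma~\ref{lem:refined_S}.
\begin{itemize}
\item The mean term gives \(\lambda\EE[S]=\lambda\binom{k}{2}\mu^2+\text{errors}=-\frac{a^3}{p^3\sqrt d}\binom{k}{2}+O(\frac{k^2}{D\sqrt d})\). Then \(\binom{k}{2}+\binom{k}{3}=\binom{k+1}{3}\) produces the new cubic term.
\item The variance term equals \((1-\gamma(p))\frac{a^4}{2p^4d}k(k-1)^2\), up to \(O(\frac{k^3}{Dd})\). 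This is precisely the summand of \(\mathcal R\) at \(m=s\), since \(r-m=k\).
\item The term \(\frac{4|\lambda|k}{d}\) is \(O(k/\sqrt d)\).
\item The \(O(k\delta)\) from Claim~\ref{claim:connection} is absorbed into \(O(\mathcal E_k)\).
\end{itemize}
Hence \(\mathcal R_{s,r}\) plus the new summand equals \(\mathcal R_{s-1,r}\), closing the induction.

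\emph{Main obstacle.} The delicate point is justifying that, after conditioning on \(\overline E_{s,i}\) and \(B_r\), the \(y_i(s)\) are independent upper-truncated \(\N(0,1/d)\) variables with thresholds \(-c_p/\sqrt d+O(1/(D\sqrt d))\). Two issues must be checked here.
\begin{enumerate}
\item The threshold depends on the projections \(\langle\pi_{s-1}(\by_i),\pi_{s-1}(\by_s)\rangle\) and on \(y_s(s)\approx 1\). Perfectness bounds these by \(\alpha^2\ell/d=O(1/(D\sqrt d))\) with \(d=D^2\ell^2\), which is the source of \(\varepsilon=O(D^{-1})\).
\item Restricting to \(B_r\) is a monotone cutoff that only decreases the probability. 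We handle it, as in HMS, by dropping \(B_r\) at the column-\(s\) step while keeping the perfectness of the earlier columns used to control the thresholds.
\end{enumerate}
I would also verify that the Hölder exponents in Lemma~\ref{lem:refined_S} are admissible, i.e.\ that \(d\ge4Q_D|\lambda|k\) holds. This follows from \(|\lambda|k=O(\sqrt d\,\ell)=O(d/D)\).
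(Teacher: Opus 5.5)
Your proposal is correct and follows essentially the same route as the paper. The shared steps are:
\begin{itemize}
\item reverse induction from the trivial case $s=r-1$;
\item the total-probability factorization over column $s$;
\item Claim~\ref{claim:connection} for the isolation factor;
\item the split of $\Sigma_s$ into its $\pi_{s-1}$ part plus $S$;
\item Lemma~\ref{lem:refined_S} with $\mu_i=-\frac{a}{p\sqrt d}+O(\frac{1}{D\sqrt d})$;
\item the recursion $\mathcal R_{s-1,r}=\mathcal R_{s,r}+(\text{summand at }m=s)$.
\end{itemize}
Your remarks on the thresholds, on $B_r$, and on $d\ge 4Q_D|\lambda|k$ add care that the paper leaves implicit.

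One slip to fix: your displayed identity $\Sigma_s=\Sigma_{s-1}+S$ is false for the paper's $\Sigma_{s-1}$. The correct split is $\Sigma_s=\sum_{s<i<j\le r}\langle\pi_{s-1}(\by_i),\pi_{s-1}(\by_j)\rangle+S$, which is what your next step actually uses when you add the Claim's cross terms with vertex $s$ to recover $\Sigma_{s-1}$. Likewise, keep $k=r-s$ throughout rather than switching to $k=r-s+1$ for the target.
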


\begin{proof}[Proof of Lemma~\ref{lem:refined_induction_step}]
We proceed by reverse induction on \(s\). For the base case \(s = r-1\) we have \(k = 1\), and a single vertex automatically forms an independent set.  Since \(\Sigma_{r-1}=0\) and \(\mathcal R_{r-1,r}=0\), the right-hand side is \(1\), so the bound holds trivially. For the inductive step, assume the bound for \(s\), corresponding to size \(k\), and prove it for \(s-1\), corresponding to size \(k+1\).

By the law of total probability, conditioning on the \(s\)-th column \(M_s\) and the event that vertex \(s\) has no blue edges to later vertices, we have
\begin{equation}\label{eq:con-total}
\begin{aligned}
\mathbb{P}(I_{s-1} \wedge B_r \mid M[s-1])
&=
\mathbb{E}_{M_s}\bigg[
\mathbb{P}(I_s \wedge B_r \mid M[s])
\Bigm|
\bigwedge_{i=s+1}^r \overline{E}_{s,i},M[s-1]
\bigg] \\
&\quad \cdot
\mathbb{P}\bigg(
\bigwedge_{i=s+1}^r \overline{E}_{s,i}
\Bigm| M[s-1]
\bigg).
\end{aligned}
\end{equation}

In the inductive step for a given \(s\), the column \(M_s\) is already exposed. Moreover, under the conditioning \(\bigwedge_{i=s+1}^r \overline{E}_{s,i}\), the coordinates \(y_{s+1}(s),\ldots,y_r(s)\) are independent and follow the upper-truncated Gaussian distribution as in Lemma~\ref{lem:refined_S}.

From Claim~\ref{claim:connection}, the second factor in \eqref{eq:con-total} is bounded by
\begin{equation}\label{eq:iso}
\begin{aligned}
\mathbb{P}\left(
\bigwedge_{i=s+1}^r \overline{E}_{s,i}
\Bigm| M[s-1]
\right) \le
p^{k}
\exp\left(
-\frac{a\sqrt d}{p}
\sum_{i=s+1}^r
\bigl\langle \pi_{s-1}(\by_s),\pi_{s-1}(\by_i)\bigr\rangle
+
O(k\delta)
\right).
\end{aligned}
\end{equation}
The factor \(O(k\delta)\) will be included in the one-step error \(O(\mathcal E_k)\).

To evaluate the first factor in \eqref{eq:con-total}, decompose the inner product sum in the induction hypothesis using orthogonal projection:
\begin{equation}\label{con-f}
\Sigma_s
=
\sum_{s<i<j\le r}
\bigl\langle \pi_{s-1}(\by_i),\pi_{s-1}(\by_j)\bigr\rangle
+
\underbrace{
\sum_{s<i<j\le r} y_i(s)y_j(s)
}_{:=S}.
\end{equation}
Substituting this into the induction hypothesis and isolating the \(M_s\)-dependent part gives
\begin{align}\label{first-f}
\mathbb{E}_{M_s}[\cdot]
\le
p^{\binom{k}{2}}
\exp\Bigg(
-\frac{a\sqrt d}{p}
\sum_{s<i<j\le r}
\bigl\langle \pi_{s-1}(\by_i),\pi_{s-1}(\by_j)\bigr\rangle
-
\frac{a^3}{p^3\sqrt d}\binom{k}{3}
+
\mathcal R_{s,r}
\Bigg)
\cdot
\mathbb{E}_{M_s}[e^{\lambda S}],
\end{align}
where \(\lambda = -a\sqrt d/p\).

Now apply Lemma~\ref{lem:refined_S}.  For sufficiently large \(D\),
\begin{equation}\label{eq:mom}
\begin{aligned}
\log \mathbb{E}_{M_s}[e^{\lambda S}]
&\le
\lambda\mathbb{E}[S]
+
\bigl(1-\gamma(p)+O(D^{-1})\bigr)
\frac{\lambda^2(k-1)^2}{2d}
\sum_{i=s+1}^r\mu_i^2
+
\frac{4|\lambda|k}{d},
\end{aligned}
\end{equation}
where \(\mu_i = \mathbb{E}[y_i(s) \mid \overline{E}_{s,i}, M[s-1], y_s(s)]\).
By Lemma~\ref{lem:trunc_exp} and the perfect sequence estimates,
\[
\mu_i = -\frac{a}{p\sqrt d}+O\left(\frac{1}{D\sqrt d}\right).
\]
Substituting \(\lambda = -a\sqrt d/p\) and this expansion into \eqref{eq:mom} gives
\begin{equation}\label{eq:lead}
\lambda\mathbb{E}[S]
=
\lambda\sum_{s<i<j\le r}\mu_i\mu_j
=
-\frac{a^3}{p^3\sqrt d}\binom{k}{2}
+
O\left(\frac{k^2}{D\sqrt d}\right),
\end{equation}
and
\begin{equation}\label{eq:var}
\begin{aligned}
\bigl(1-\gamma(p)+O(D^{-1})\bigr)
\frac{\lambda^2(k-1)^2}{2d}
\sum_{i=s+1}^r\mu_i^2 =
\bigl(1-\gamma(p)\bigr)\frac{a^4}{2p^4d}k(k-1)^2
+
O\left(\frac{k^3}{Dd}\right).
\end{aligned}
\end{equation}
Plugging \eqref{eq:lead} and \eqref{eq:var} into \eqref{eq:mom} yields
\begin{equation}\label{eq:mom-2}
\mathbb{E}_{M_s}[e^{\lambda S}]
\le
\exp\left(
-\frac{a^3}{p^3\sqrt d}\binom{k}{2}
+
\bigl(1-\gamma(p)\bigr)\frac{a^4}{2p^4d}k(k-1)^2
+
O\left(
\frac{k^2}{D\sqrt d}
+
\frac{k^3}{Dd}
+
\frac{k}{\sqrt d}
\right)
\right).
\end{equation}

Substitute \eqref{eq:iso} and \eqref{eq:mom-2} into \eqref{eq:con-total}, and also incorporate the exponential factors from \eqref{first-f}.  Since
\[
p^k p^{\binom{k}{2}}=p^{\binom{k+1}{2}},
\qquad
\binom{k}{3}+\binom{k}{2}=\binom{k+1}{3},
\]
and since the two geometric sums combine as
\[
\sum_{s<i<j\le r}
\bigl\langle \pi_{s-1}(\by_i),\pi_{s-1}(\by_j)\bigr\rangle
+
\sum_{i=s+1}^r
\bigl\langle \pi_{s-1}(\by_s),\pi_{s-1}(\by_i)\bigr\rangle
=
\Sigma_{s-1},
\]
we obtain
\[
\begin{aligned}
\mathbb{P}(I_{s-1} \wedge B_r \mid M[s-1])
&\le
p^{\binom{k+1}{2}}
\exp\left(
-\frac{a\sqrt d}{p}\Sigma_{s-1}
-
\frac{a^3}{p^3\sqrt d}\binom{k+1}{3}
\right.\\
&\quad\qquad\left.
+
\mathcal R_{s,r}
+
\bigl(1-\gamma(p)\bigr)\frac{a^4}{2p^4d}k(k-1)^2
+
O\bigl(\mathcal E_k\bigr)
\right).
\end{aligned}
\]
Here \(k=r-s\), so by the definition of \(\mathcal R_{s,r}\),
\[
\mathcal R_{s-1,r}
=
\mathcal R_{s,r}
+
\bigl(1-\gamma(p)\bigr)\frac{a^4}{2p^4d}(r-s)(r-s-1)^2
+
O\bigl(\mathcal E_{r-s}\bigr).
\]
Thus the last display is exactly the desired bound for \(s-1\), completing the induction step.
\end{proof}

The following proposition is the core technical improvement of this paper, which introduces an extra negative term ``\(-\beta_{\mathrm{quad}}(p) r^4/d\)'' in the exponent.

\begin{proposition}[Improved independent set probability]\label{prop:improved_ind}
For any \(C>1\), let \(d = D^2\ell^2\) with \(D\) sufficiently large. Then for all \(1 \le r \le C\ell\), the probability \(P_{\mathrm{red}, r}^*\) that the vectors \(\bx_1,\dots,\bx_r\) form a red clique (independent set) and are perfect satisfies
\[
P_{\mathrm{red}, r}^*
\le
p^{\binom{r}{2}}
\exp\left(
-\frac{a^3}{p^3\sqrt d} \binom{r}{3}
+
K\left(
\frac{r^3}{D\sqrt d}
+
\frac{r^2}{\sqrt d}
+
r^2\delta
\right)
-
\beta_{\mathrm{quad}}(p)\frac{r^4}{d}
+
O(D^{-1})\frac{r^4}{d}
\right),
\]
where \(a = \phi(c_p)\), \(K = K(p) > 0\) depends only on \(p\), \(\delta=\alpha d^{-1/4}\) is the perfection parameter from Definition~\ref{def:perfect}, and \(\beta_{\mathrm{quad}}(p)=\gamma(p)a^4/(8p^4)>0\). Here \(\gamma(p) = 1 - \operatorname{Var}(Z \mid Z \le -c_p) > 0\). Moreover, the same bound holds for the unconditional red clique probability \(P_{\mathrm{red},r}\), since the probability of failing to be perfect is exponentially small and can be absorbed into the error term.
\end{proposition}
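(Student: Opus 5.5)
The plan is to specialize Lemma~\ref{lem:refined_induction_step} to \(s=0\), so that \(k=r\), and then evaluate the accumulated remainder \(\mathcal R_{0,r}\) explicitly. By Lemma~\ref{lem:bartlett}, \(P^*_{\mathrm{red},r}\) equals the probability that \(\by_1,\dots,\by_r\) are perfect and span an independent set. This is exactly \(\PP[I_0\wedge B_r\mid M[0]]\), where conditioning on \(M[0]\) is vacuous. Since \(\pi_0\) is the zero map, \(\Sigma_0=0\), and the lemma gives
\[
P^*_{\mathrm{red},r}\le p^{\binom r2}\exp\Bigl(-\frac{a^3}{p^3\sqrt d}\binom r3+\mathcal R_{0,r}\Bigr).
\]
So everything reduces to bounding \(\mathcal R_{0,r}\) by substituting \(t=r-m\), which ranges over \(1,\dots,r-1\).

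For the main term I will use \(\sum_{t=1}^{r-1}t(t-1)^2=\frac{r^4}{4}+O(r^3)\). This gives
\[
\bigl(1-\gamma(p)\bigr)\frac{a^4}{2p^4d}\Bigl(\frac{r^4}{4}+O(r^3)\Bigr)=\frac{a^4}{8p^4}\frac{r^4}{d}-\beta_{\mathrm{quad}}(p)\frac{r^4}{d}+O\Bigl(\frac{r^3}{d}\Bigr).
\]
For the error terms, summing \(\mathcal E_t\) over \(t<r\) gives \(O\bigl(\frac{r^3}{D\sqrt d}+\frac{r^4}{Dd}+\frac{r^2}{\sqrt d}+r^2\delta\bigr)\). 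The piece \(O(r^4/(Dd))\) is exactly the \(O(D^{-1})\,r^4/d\) term in the statement. The remaining pieces are of the forms \(K r^3/(D\sqrt d)\), \(K r^2/\sqrt d\) and \(K r^2\delta\), with an implied constant depending only on \(p\).

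The one non-mechanical point is the positive quartic term \(\frac{a^4}{8p^4}\frac{r^4}{d}\). This is precisely the term that HMS also incur, and it cancels against nothing. I will absorb it into the \(K r^3/(D\sqrt d)\) error using \(r\le C\ell\) and \(\sqrt d=D\ell\):
\[
\frac{r^4}{d}=\frac{r^3}{D\sqrt d}\cdot\frac{r}{D\ell}\le C\,\frac{r^3}{D\sqrt d}.
\]
Since \(C\) is determined by \(p\), enlarging \(K=K(p)\) by \(Ca^4/(8p^4)\) suffices. The same estimate absorbs the \(O(r^3/d)\) remainder. What survives explicitly is the gain \(-\beta_{\mathrm{quad}}(p)r^4/d\), which comes from the factor \(1-\gamma(p)\) supplied by Lemma~\ref{lem:CGF}.

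Finally, for the unconditional probability \(P_{\mathrm{red},r}\) I will write \(P_{\mathrm{red},r}\le P^*_{\mathrm{red},r}+\PP[\text{not perfect}]\). I then bound the second term by a union bound over \(i\le r\), using Lemma~\ref{lem:norm} and Lemma~\ref{lem:proj}, exactly as in the ``moreover'' part of Proposition~\ref{prop:perfect_bound}. I expect this step to be the main obstacle. The failure probability \((p/10)^{10C\ell}\) is only exponential in \(\ell\), while \(p^{\binom r2}\) can be as small as \(p^{C^2\ell^2/2}\), so for large \(r\) the failure probability is not automatically negligible relative to the main term. I would follow the HMS argument at this point and not redo it. That argument either restricts the perfectness requirement to the relevant vertex set, or uses the fact that only the bound at \(r=\ell\) and \(r=C\ell\) enters the final Ramsey count, where the failure term is handled separately in the union bound over vertex subsets.
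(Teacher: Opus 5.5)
Your bound for \(P^*_{\mathrm{red},r}\) follows the paper's argument. You apply Lemma~\ref{lem:refined_induction_step} at \(s=0\), use \(\sum_{t<r}t(t-1)^2=r^4/4+O(r^3)\), sum the \(\mathcal E_t\), and absorb the leftover HMS quartic term \(\frac{a^4}{8p^4}\frac{r^4}{d}\) into \(K r^3/(D\sqrt d)\). The paper does this last step implicitly when it says the term reproduces the HMS factor up to the usual HMS error. There is one slip: the correct identity is \(\frac{r^4}{d}=\frac{r^3}{D\sqrt d}\cdot\frac{r}{\ell}\), not \(\frac{r^3}{D\sqrt d}\cdot\frac{r}{D\ell}\). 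Your conclusion \(r^4/d\le C\,r^3/(D\sqrt d)\) is still correct.

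The gap is the ``moreover'' clause for \(P_{\mathrm{red},r}\). You correctly see that \(P_{\mathrm{red},r}\le P^*_{\mathrm{red},r}+\PP[\text{not perfect}]\) is useless: the failure probability is \(e^{-\Theta(\ell)}\), while the target is \(e^{-\Theta(\ell^2)}\), already at \(r=\ell\). But you then give no argument, and your fallbacks do not provide one. Perfectness is already a condition on exactly these \(r\) vectors. Handling the failure term separately in the Ramsey count also fails, because \(\binom n\ell e^{-\Theta(\ell)}\to\infty\) for \(n=e^{\rho\ell}\).

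What must be bounded is \(\PP[\text{red clique}\wedge\text{not perfect}]\). The paper does this by greedy extraction: keep \(\bx_i\) if and only if \(\|\bx_i\|_2\in(1-\delta,1+\delta)\) and its projection onto the span of the previously kept vectors has norm at most \(\alpha\sqrt\ell/\sqrt d\). The kept vectors form a perfect red clique. By independence together with Lemmas~\ref{lem:norm} and~\ref{lem:proj}, each discarded vector fails its test with probability at most \(3(p/10)^{10C\ell}\) given the kept ones. So a fixed kept set of size \(r-u\) contributes at most \(P^*_{\mathrm{red},r-u}\bigl(3(p/10)^{10C\ell}\bigr)^{u}\).

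Passing from the bound at \(r-u\) to the bound at \(r\) costs at most:
\(p^{-ur}\le p^{-C\ell u}\) from \(p^{\binom{r-u}2}\);
\(e^{O(\ell u/D)}\) from the cubic term;
\(e^{4\beta_{\mathrm{quad}}(p)C^3\ell u/D^2}\) from the quartic gain.
Together with the \(\binom ru\le r^u\) choices of kept set, this is swamped by \((p/10)^{10C\ell u}\), so the \(u=0\) term dominates.

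Alternatively, your own inequality gives \(K\frac{r^3}{D\sqrt d}\le\bigl(K+C\beta_{\mathrm{quad}}(p)\bigr)\frac{r^3}{D\sqrt d}-\beta_{\mathrm{quad}}(p)\frac{r^4}{d}\). Hence the unconditional bound in Proposition~\ref{prop:perfect_bound}, with \(K\) enlarged, already implies the claimed one. Be aware that this also shows the gain \(-\beta_{\mathrm{quad}}(p)r^4/d\) is dominated by \(C\beta_{\mathrm{quad}}(p)\,r^3/(D\sqrt d)\). It therefore sits inside the same error term into which you and the paper absorbed the HMS quartic term, so the stated bound is a formal consequence of Proposition~\ref{prop:perfect_bound}.
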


\begin{proof}
We apply Lemma~\ref{lem:refined_induction_step} with \(s=0\). At \(s=0\), no columns are exposed and \(\pi_0\) is the zero map, so \(\Sigma_0=0\). Hence
\[
\mathbb{P}(I_0 \wedge B_r)
\le
p^{\binom{r}{2}}
\exp\left(
-\frac{a^3}{p^3\sqrt d}\binom{r}{3}
+
\mathcal R_{0,r}
\right).
\]
By the definition of \(\mathcal R_{0,r}\),
\[
\mathcal R_{0,r}
=
\sum_{m=1}^{r-1}
\bigl(1-\gamma(p)\bigr)\frac{a^4}{2p^4d}(r-m)(r-m-1)^2
+
\sum_{m=1}^{r-1}O\bigl(\mathcal E_{r-m}\bigr).
\]
Putting \(t=r-m\), we have
\[
\sum_{m=1}^{r-1}(r-m)(r-m-1)^2
=
\sum_{t=1}^{r-1}t(t-1)^2
=
\frac{r^4}{4}+O(r^3).
\]
Therefore
\[
\begin{aligned}
\sum_{m=1}^{r-1}
\bigl(1-\gamma(p)\bigr)\frac{a^4}{2p^4d}(r-m)(r-m-1)^2
=
\sum_{m=1}^{r-1}
\frac{a^4}{2p^4d}(r-m)(r-m-1)^2 -
\beta_{\mathrm{quad}}(p)\frac{r^4}{d}
+
O\left(\frac{r^3}{d}\right).
\end{aligned}
\]
The accumulated error terms satisfy
\[
\sum_{m=1}^{r-1}O\bigl(\mathcal E_{r-m}\bigr)
=
O\left(\frac{r^3}{D\sqrt d}\right)
+
O\left(\frac{r^4}{Dd}\right)
+
O\left(\frac{r^2}{\sqrt d}\right)
+
O(r^2\delta).
\]
Since \(d=D^2\ell^2\) and \(r\le C\ell\), the term \(O(r^2/\sqrt d)\) is lower order in the final application \(r=\ell\), and the additional \(O(r^2\delta)\) term is also lower order after normalization by \(\ell^2\).  We nevertheless keep it explicitly in the probability estimate.

The first term
\[
\sum_{m=1}^{r-1}
\frac{a^4}{2p^4d}(r-m)(r-m-1)^2
\]
together with the leading term \(-a^3\binom r3/(p^3\sqrt d)\) reproduces the main HMS exponential factor, up to the usual HMS error
\[
K\left(
\frac{r^3}{D\sqrt d}
+
\frac{r^2}{\sqrt d}+r^2\delta
\right).
\]
The new negative contribution is exactly \(-\beta_{\mathrm{quad}}(p)r^4/d\). Thus
\[
\mathbb{P}(I_0 \wedge B_r)
\le
p^{\binom{r}{2}}
\exp\left(
-\frac{a^3}{p^3\sqrt d}\binom{r}{3}
+
K\left(
\frac{r^3}{D\sqrt d}
+
\frac{r^2}{\sqrt d}
+
r^2\delta
\right)
-
\beta_{\mathrm{quad}}(p)\frac{r^4}{d}
+
O(D^{-1})\frac{r^4}{d}
\right).
\]

Finally, as in the extraction argument of~\cite{HMS}, the same bound holds for the unconditional probability \(P_{\mathrm{red},r}\), with the resulting constant absorbed into \(O(D^{-1})r^4/d\). This is done via an extraction procedure: given the sequence 
$\bx_1,\dots,\bx_r$ in order, we greedily select a subsequence 
$\by_1,\dots,\by_t$ as follows. For each $i$, if $\bx_i$ satisfies 
$\|\bx_i\|_2\in(1-\delta,1+\delta)$ and 
$\|\pi_{\operatorname{span}\{\by_1,\dots,\by_j\}}(\bx_i)\|_2 \le \alpha\sqrt{\ell}/\sqrt{d}$, 
we add it to the subsequence; otherwise we discard it. The resulting 
subsequence is perfect by construction. 

Now suppose the original sequence forms a red clique.  If a vertex \(x_i\) is discarded, then by definition either its norm deviates from \(1\) by more than \(\delta\), or its projection onto the span of previously kept vectors is too large.  By Lemma~\ref{lem:norm} and Lemma~\ref{lem:proj}, each such event occurs with probability at most \((p/10)^{10C\ell}\), even conditionally on the previous vectors.  Hence, the probability that a given set \(I\) of indices is kept and the rest discarded is at most
$
        P^*_{\mathrm{red},|I|}
        \left(p/{10}\right)^{10C\ell(r-|I|)}.
$
If \(|I|=r-u\), then replacing \((r-u)^4\) by \(r^4\) costs at most
$
        \beta_{\rm quad}(p)\frac{r^4-(r-u)^4}{d}
        \le
        \frac{4\beta_{\rm quad}(p)C^3}{D^2}\ell u,
$
which is absorbed by \((p/10)^{10C\ell u}\) when \(D\) is sufficiently large.  Summing over all possible \(I\), the contributions from non-perfect sequences are dominated by the case \(I=[r]\), contributing only a constant that can be absorbed into \(O(D^{-1})r^4/d\).  Consequently, the same upper bound holds for \(P_{\mathrm{red},r}\).  This completes the proof of Proposition~\ref{prop:improved_ind}.
\end{proof}

\section{Improved Ramsey lower bound}
\label{sec:proof}

With Proposition~\ref{prop:improved_ind} established, we now complete the proof of
Theorem~\ref{thm:main}.

\begin{proof}[Proof of Theorem~\ref{thm:main}]
Set \(d=D^2\ell^2\), where \(D\) is a sufficiently large constant.  Recall
that \(p_C\in(0,1/2)\) is defined by
\[
C=\frac{\log p_C}{\log(1-p_C)}.
\]
Equivalently, \(-\frac12\log p_C=-\frac C2\log(1-p_C)\).  All estimates below
are uniform for \(p\) in a fixed neighborhood of \(p_C\).

Let \(n=\exp(\rho\ell)\).  By the first moment method,
\[
\mathbb E[\#\text{ red }K_\ell]\le \binom n\ell P_{\mathrm{red},\ell},
\quad
\mathbb E[\#\text{ blue }K_{C\ell}]\le \binom n{C\ell}P_{\mathrm{blue},C\ell}.
\]

By Proposition~\ref{prop:perfect_bound},
with \(r=\ell\) and \(d=D^2\ell^2\), the HMS red bound can be written as
\[
P_{\mathrm{red},\ell}^{\mathrm{HMS}}
\le
p^{\binom{\ell}{2}}
\exp\left(
-\frac{a^3}{p^3\sqrt d}\binom{\ell}{3}
+
K\frac{\ell^3}{D\sqrt d}
\right)
:=
\exp\left(
-\rho_{\mathrm{red}}^{\mathrm{HMS}}(p,D;\ell)\ell^2
\right).
\]
Thus
\[
\begin{aligned}
\rho_{\mathrm{red}}^{\mathrm{HMS}}(p,D;\ell)
&=
-\frac1{\ell^2}
\log\left[
p^{\binom{\ell}{2}}
\exp\left(
-\frac{a^3}{p^3\sqrt d}\binom{\ell}{3}
+
K\frac{\ell^3}{D\sqrt d}
\right)
\right]  \\
&=
-\frac{\binom{\ell}{2}}{\ell^2}\log p
+
\frac{a^3}{p^3D}\frac{\binom{\ell}{3}}{\ell^3}
-
\frac{K}{D^2}  \\
&=
-\frac12\log p
+
\frac{\kappa(p)}{D}
-
\frac{K}{D^2}
+
o_\ell(1),
\end{aligned}
\]
where \(\kappa(p)=a^3/(6p^3)\).

By Proposition~\ref{prop:improved_ind}, the refined red estimate gives
\begin{align*}
P_{\mathrm{red},\ell}^{\mathrm{new}}
&\le
p^{\binom{\ell}{2}}
\exp\left(
-\frac{a^3}{p^3\sqrt d} \binom{\ell}{3}
+
K\left(
\frac{\ell^3}{D\sqrt d}
+
\frac{\ell^2}{\sqrt d}
+
\ell^2\delta
\right)
-
\beta_{\mathrm{quad}}(p)\frac{\ell^4}{d}
+
O(D^{-1})\frac{\ell^4}{d}
\right)
\\&:=
\exp\left(
-\rho_{\mathrm{red}}^{\mathrm{new}}(p,D;\ell)\ell^2
\right).
\end{align*}
The term \(K\ell^2\delta\) contributes \(O(\delta)=o_\ell(D^{-2})\) to the normalized exponent, since \(\delta=\alpha d^{-1/4}=O(D^{-1/2}\ell^{-1/2})\) and \(D\) is fixed while \(\ell\to\infty\).  More importantly, the comparison with HMS is made before the remaining second-order terms are absorbed into the generic constant \(K\).  In the HMS induction the corresponding linear-exponential-moment contribution is
\[
\frac{a^4}{2p^4d}k(k-1)^2,
\]
whereas the refined cumulant generating function estimate replaces it by
\[
\bigl(1-\gamma(p)+O(D^{-1})\bigr)
\frac{a^4}{2p^4d}k(k-1)^2.
\]
All other terms agree with the HMS expansion up to
\(O(D^{-1})r^4/d+o_\ell(r^4/d)\).  Thus the refined estimate improves the red exponent, relative to the HMS estimate, by
\[
\frac{\gamma(p)a^4}{8p^4}\frac{r^4}{d}
+
O(D^{-1})\frac{r^4}{d}
+
o_\ell\left(\frac{r^4}{d}\right).
\]
Consequently,
\[
\begin{aligned}
\rho_{\mathrm{red}}^{\mathrm{new}}(p,D;\ell)
&=
\rho_{\mathrm{red}}^{\mathrm{HMS}}(p,D;\ell)
+
\beta_{\mathrm{quad}}(p)\frac{\ell^2}{d}
+
O(D^{-1})\frac{\ell^2}{d}
+
O(\delta)
+
O\left(\frac{1}{D\ell}\right)
+
o\left(\frac{\ell^2}{d}\right)
\\
&=
\rho_{\mathrm{red}}^{\mathrm{HMS}}(p,D;\ell)
+
\frac{\beta_{\mathrm{quad}}(p)}{D^2}
+
O(D^{-3})
+
o_\ell(D^{-2}).
\end{aligned}
\]
Equivalently, after letting \(\ell\to\infty\) with \(D\) fixed,
\[
\rho_{\mathrm{red}}^{\mathrm{new}}(p,D)
=
\rho_{\mathrm{red}}^{\mathrm{HMS}}(p,D)
+
\frac{\beta_{\mathrm{quad}}(p)}{D^2}
+
O(D^{-3}).
\]

We keep the blue-side estimate from the HMS analysis unchanged.  By Proposition~\ref{prop:perfect_bound} with \(r=C\ell\), and writing
\(\Lambda(p)=a^3/(6(1-p)^3)\), the normalized blue exponent is
\[
\rho_{\mathrm{blue}}(p,D)
= -\frac C2\log(1-p)-\frac{C^2\Lambda(p)}{D}+O(D^{-2}).
\]

Define
\[
\rho^{\mathrm{new}}(D)
:=
\max_p
\min\{\rho_{\mathrm{red}}^{\mathrm{new}}(p,D),\rho_{\mathrm{blue}}(p,D)\}.
\]

We now compare the two optimized exponents.  The leading red and blue curves
are
\[
r_0(p)=-\frac12\log p,
\quad
b_0(p)=-\frac C2\log(1-p),
\]
and they cross at \(p=p_C\). The crossing is transverse since
\[
r_0'(p_C)=-\frac1{2p_C}<0 
\quad \text{and} \quad
b_0'(p_C)=\frac{C}{2(1-p_C)}>0
\]
have opposite signs and are therefore unequal.

The HMS and refined optimizers lie at \(p_C+O(D^{-1})\) (see \cite[Lemma~5.1]{HMS} for the HMS case; the refined case follows similarly because the correction is of higher order).  
The refined estimate adds $\beta_{\mathrm{quad}}(p)/D^2$ to the red exponent while leaving the blue exponent unchanged. At the crossing $p=p_C$, the red and blue leading-order exponents satisfy $r_0(p_C)=b_0(p_C)$. A vertical shift of the red curve by $\delta = \beta_{\mathrm{quad}}(p_C)/D^2 + O(D^{-3})$ increases the optimum by $\lambda_C \delta + O(\delta^2)$,\footnote{Consider the leading-order exponents $r_0(p)$ and $b_0(p)$. At the crossing $p=p_C$ we have $r_0(p_C)=b_0(p_C)$. Adding a vertical shift $\delta$ to $r_0$ and setting $r_0(p_C+h)+\delta = b_0(p_C+h)$ gives $r_0'(p_C)h + \delta = b_0'(p_C)h + O(h^2)$. Solving yields $h = \delta/(b_0'(p_C)-r_0'(p_C)) + O(\delta^2)$, so the optimum increases by $\lambda_C\delta + O(\delta^2)$ with $\lambda_C = b_0'(p_C)/(b_0'(p_C)-r_0'(p_C))$.} 
where $$\lambda_C = b_0'(p_C)/(b_0'(p_C)-r_0'(p_C))= \frac{Cp_C}{Cp_C+(1-p_C)}> 0$$ is the sensitivity factor (See Figure~\ref{fig:optimization}). Since the HMS optimum is $\rho^{\mathrm{HMS}}(D) = r_0(p_C) + O(D^{-1})$, we obtain
\[
\rho^{\mathrm{new}}(D) = \rho^{\mathrm{HMS}}(D) + \lambda_C\frac{\beta_{\mathrm{quad}}(p_C)}{D^2} + O(D^{-3}),
\]
where $\rho^{\mathrm{HMS}}(D)=\max_p \min\{\rho_{\mathrm{red}}^{\mathrm{HMS}}(p,D),\rho_{\mathrm{blue}}(p,D)\}$.

\medskip
\begin{figure}[t]
\centering
\begin{tikzpicture}[scale=1, >=stealth]
  \draw[->] (-0.5,0) -- (6.0,0) node[below] {$p$};
  \draw[->] (0,-1.5) -- (0,3.2) node[left] {$\rho$};

  \def\pC{2.5}
  \def\redY{0.8}
  \def\blueSlope{0.65}
  \def\redSlope{-0.85}

  \draw[blue, thick, domain=0.5:5.2] plot (\x, {\blueSlope*(\x-\pC) + \redY})
        node[below right] {$\rho_{\mathrm{blue}}(p)$};

  \draw[red, thick, domain=0.5:5.2] plot (\x, {\redSlope*(\x-\pC) + \redY})
        node[below] {$\rho_{\mathrm{red}}(p)$};

  \draw[red, thick, dashed, domain=0.5:5.2] plot (\x, {\redSlope*(\x-\pC) + \redY + 0.9})
        node[below right] {$\rho_{\mathrm{red}}^{\mathrm{new}}(p)$};

  \fill[black] (\pC, \redY) circle (2pt) 
        node[left, xshift=-1.5pt] {$(p_C^{\mathrm{HMS}},\;\rho^{\mathrm{HMS}})$};

  \def\newShift{0.6}
  \def\newP{\pC+\newShift}
  \def\newRho{\redY + 0.9 + \redSlope*\newShift}
  \fill[black] (\newP, \newRho) circle (2pt) 
        node[right, xshift=2pt] {$(p_C^{\mathrm{HMS}}+h,\;\rho^{\mathrm{HMS}}+\Delta)$};

\end{tikzpicture}
\caption{Comparison of HMS and refined red exponents. The refined red curve $\rho_{\mathrm{red}}^{\mathrm{new}}(p)$ (dashed) intersects the blue curve at $(p_C^{\mathrm{HMS}}+h,\rho^{\mathrm{HMS}}+\Delta)$, where \(p_C^{\mathrm{HMS}}\) denotes the optimal $p$ in the HMS construction, $h = O(D^{-1})$  is a small perturbation and $\Delta = \lambda_C \beta_{\mathrm{quad}}(p_C)D^{-2} + O(D^{-3})$.}
\label{fig:optimization}
\end{figure}

\medskip
Now choose \(n=\exp((\rho^{\mathrm{new}}(D)-o(1))\ell)\). Then from the definition of $\rho^{\mathrm{new}}(D)$,
\[
P_{\mathrm{red},\ell}^{\mathrm{new}}\le\exp\left(-\rho^{\mathrm{new}}(D)\cdot\ell^2\right), \quad P_{\mathrm{blue},C\ell}\le\exp\left(-C\rho^{\mathrm{new}}(D)\cdot\ell^2\right).
\]
Thus, $\mathbb E[\#\text{ red }K_\ell]\le \binom n\ell P_{\mathrm{red},\ell}\to 0,$ and
$\mathbb E[\#\text{ blue }K_{C\ell}]\le \binom n{C\ell}P_{\mathrm{blue},C\ell}\to 0$ by a direct calculation.
 Therefore, there is a red-blue coloring of \(K_n\) with no red \(K_\ell\) and no blue \(K_{C\ell}\), implying that
\[
R(\ell,C\ell)
\ge n=
\exp((\rho^{\mathrm{new}}(D)-o(1))\ell).
\]

We now write this in the form stated in Theorem~\ref{thm:main}.  For this
fixed choice of \(D\), write
\[
p_C^{-1/2}+\varepsilon_{\mathrm{HMS}}
=
\exp(\rho^{\mathrm{HMS}}(D)),
\]
where $\varepsilon_{\mathrm{HMS}}>0$ is the constant obtained in \cite{HMS}.
Then
\[
\begin{aligned}
\exp(\rho^{\mathrm{new}}(D))
&=
\left(p_C^{-1/2}+\varepsilon_{\mathrm{HMS}}\right)
\exp\left(
\lambda_C\frac{\beta_{\mathrm{quad}}(p_C)}{D^2}
+
O(D^{-3})
\right) \\
&=
\left(p_C^{-1/2}+\varepsilon_{\mathrm{HMS}}\right)
\left(
1+
\lambda_C\frac{\beta_{\mathrm{quad}}(p_C)}{D^2}
+
O(D^{-3})
\right).
\end{aligned}
\]
Thus we can rephrase the lower bound as
\[
R(\ell,C\ell)\ge \left(p_C^{-1/2}+\eta\right)^\ell,
\]
where
\[
\eta
=
\varepsilon_{\mathrm{HMS}}
+
\left(p_C^{-1/2}+\varepsilon_{\mathrm{HMS}}\right)
\lambda_C
\frac{\beta_{\mathrm{quad}}(p_C)}{D^2}
+
O(D^{-3}p_C^{-1/2}).
\]
Recall that $\lambda_C=\frac{Cp_C}{Cp_C+(1-p_C)}$
and $\beta_{\mathrm{quad}}(p_C)
=
\frac{\gamma(p_C)a^4}{8p_C^4},
$ with $\gamma(p_C)
=
1-\operatorname{Var}(Z\mid Z\le -c_{p_C})>0$. Therefore, for \(D\) sufficiently large, the error term
\(O(D^{-3}p_C^{-1/2})\) is dominated by half of the positive \(D^{-2}\)-term. Hence
\[
\eta
\ge
\varepsilon_{\mathrm{HMS}}
+
\frac12
\left(p_C^{-1/2}+\varepsilon_{\mathrm{HMS}}\right)
\lambda_C
\frac{\beta_{\mathrm{quad}}(p_C)}{D^2}
>
\varepsilon_{\mathrm{HMS}}.
\]

\smallskip
\noindent\textbf{$\bullet$ Asymptotic form as \(C\to\infty\).}
We use the parameters from Appendix~B of \cite{HMS}:
$
        p=p_C+\frac1C$, and
               $D=\frac{4aC}{1-p},
$
where \(a=\phi(c_p)\).  It is known \cite{HMS} that
\[
        \exp(\rho^{\mathrm{HMS}}(D))
        =
        e^{1/24+o(1)}p_C^{-1/2}.
\]
Combining this with the estimate obtained above, we have
\[
        \rho^{\mathrm{new}}(D)
        =
        \rho^{\mathrm{HMS}}(D)
        +
        (1+o(1))
        \lambda_C
        \frac{\gamma(p_C)\phi(c_{p_C})^4}{8p_C^4D^2}
        +
        O(D^{-3}).
\]
Since
$\lambda_C
        =
        \frac{Cp_C}{Cp_C+(1-p_C)}
        =
        1+o(1)$ when $C\to\infty$.

As \(C\to\infty\),
$
        p_C\sim\frac{\log C}{C},
        \quad
        c_{p_C}^2\sim2\log\frac1{p_C}.
$
Since \(p=p_C+1/C\) and \(1/C=o(p_C)\), we have
$
        a=\phi(c_p)\sim\phi(c_{p_C})\sim c_{p_C}p_C.
$
Therefore
$        D
        =
        \frac{4aC}{1-p}
        \sim
        4c_{p_C}Cp_C
        \sim
        4c_{p_C}\log C
        \sim
        2c_{p_C}^3.
$
Also, \(\gamma(p_C)\to1\), and hence
$
        \frac{\gamma(p_C)\phi(c_{p_C})^4}{8p_C^4}
        \sim
        \frac{c_{p_C}^4}{8}.
$
It follows that
\[
        \frac{\gamma(p_C)\phi(c_{p_C})^4}{8p_C^4D^2}
        \sim
        \frac{c_{p_C}^4/8}{4c_{p_C}^6}
        =
        \frac1{32c_{p_C}^2}
        \sim
        \frac1{64\log(1/p_C)}
        \sim
        \frac1{64\log C}.
\]
Since \(D=\Theta((\log C)^{3/2})\), we also have
$
        D^{-3}=o(\frac1{\log C}).
$
Consequently,
\[
        \rho^{\mathrm{new}}(D)
        =
        -\frac12\log p_C
        +
        \frac1{24}
        +
        \frac1{64\log C}
        +
        o\left(\frac1{\log C}\right).
\]
Exponentiating gives
\[        \exp(\rho^{\mathrm{new}}(D))
        =e^{1/24}p_C^{-1/2}\left(1+\frac1{64\log C}
        +
        o\left(\frac1{\log C}\right)
        \right).
\]
Equivalently, 
\[
        \eta
        =
        (e^{1/24}-1)p_C^{-1/2}
        +
        \frac{e^{1/24}}{64}
        \frac{p_C^{-1/2}}{\log C}
        +
        o\left(\frac{p_C^{-1/2}}{\log C}\right).
\]
This completes the proof of Theorem \ref{thm:main}.
\end{proof}

\section{Concluding remarks}

A similar refinement can also be applied to the blue cliques, using a lower-truncated version of Lemma~\ref{lem:CGF} (see Lemma~\ref{lem:local_lower_CGF} below). Incorporating both improvements would yield a slightly larger explicit constant, but does not affect the qualitative statement that $\eta > \epsilon_{\mathrm{HMS}}$. Here $D$ is the scaling parameter used above, with $d=D^2\ell^2$ being the dimension, and we take $D\to\infty$. 

\begin{lemma}[Lower-truncated estimate]\label{lem:local_lower_CGF}
Let $Z\sim\mathcal N(0,1)$, $b\in\mathbb R$, and let
$
Y_+^{(b)}:=Z-\mathbb E[Z\mid Z\ge b].
$
For every fixed $M>0$, uniformly for $|u|\le MD^{-1}$,
\[
\log\mathbb E\left[e^{uY_+^{(b)}}\mid Z\ge b\right]
\le
\frac{u^2}{2}
\left(
\operatorname{Var}(Z\mid Z\ge b)+O_{b,M}(D^{-1})
\right).
\]
\end{lemma}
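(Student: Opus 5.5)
We mirror the proof of Lemma~\ref{lem:CGF}, but now the truncation is from below, the variance function is no longer monotone in the helpful direction, and we only need a local bound for small $|u|$.

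\textbf{Closed form for the CGF.} Set $\bar\Phi(t)=1-\Phi(t)$, $m_+(b)=\phi(b)/\bar\Phi(b)$ and $V_+(b)=\operatorname{Var}(Z\mid Z\ge b)$. The identity $e^{uz}\phi(z)=e^{u^2/2}\phi(z-u)$ gives
\[
\mathbb E[e^{uZ}\mid Z\ge b]=e^{u^2/2}\,\frac{\bar\Phi(b-u)}{\bar\Phi(b)}.
\]
By Lemma~\ref{lem:trunc_exp}, $\mathbb E[Z\mid Z\ge b]=m_+(b)$. Hence
\[
K_+(u):=\log\mathbb E\bigl[e^{uY_+^{(b)}}\mid Z\ge b\bigr]=\frac{u^2}{2}-u\,m_+(b)+\log\bar\Phi(b-u)-\log\bar\Phi(b).
\]
Differentiating, $K_+'(u)=u-m_+(b)+m_+(b-u)$, so $K_+(0)=K_+'(0)=0$.

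For the second derivative we redo the computation from Lemma~\ref{lem:truncated_variance_monotone}:
\[
m_+'(t)=m_+(t)\bigl(m_+(t)-t\bigr),
\qquad
V_+(t)=1+t\,m_+(t)-m_+(t)^2=1-m_+'(t).
\]
This gives $K_+''(u)=1-m_+'(b-u)=V_+(b-u)$. In words, $K_+''(u)$ is the variance of $Z$ truncated below at $b-u$, which also follows from the exponential-tilting interpretation.

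\textbf{Taylor bound.} Taylor's formula with integral remainder gives
\[
K_+(u)=\int_0^u\!\int_0^v V_+(b-w)\,dw\,dv.
\]
The map $t\mapsto V_+(t)$ is smooth on $\mathbb R$, since $\bar\Phi>0$ and $m_+$ is real-analytic. So $V_+'$ is bounded on the compact interval $[b-M,b+M]$ by some constant $L_{b,M}$. For $|u|\le MD^{-1}\le M$ and $|w|\le|u|$, we get
\[
V_+(b-w)\le V_+(b)+L_{b,M}|w|\le V_+(b)+L_{b,M}MD^{-1}.
\]
For $u<0$ the double integral is still $\int\!\int$ over a region of area $u^2/2$ with a nonnegative integrand, so the same bound applies. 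Therefore
\[
K_+(u)\le \frac{u^2}{2}\bigl(V_+(b)+O_{b,M}(D^{-1})\bigr),
\]
which is the claim. Note that the expectation is finite for every real $u$, because the Gaussian tail dominates.

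\textbf{Main obstacle.} The delicate point is that $V_+$ is decreasing in $b$: the lower-truncated variance is decreasing, by the mirror image of Lemma~\ref{lem:truncated_variance_monotone} applied to $-Z$. Consequently, for $u>0$ the tilt moves the truncation point to $b-u<b$ and increases the variance, so the clean global bound of Lemma~\ref{lem:CGF} fails and only a local $O(D^{-1})$ correction is available. The plan handles this by using the smoothness and Lipschitz control of $V_+$ on a compact neighbourhood of $b$, which is exactly why the statement restricts to $|u|\le MD^{-1}$ and allows the constant to depend on $b$ and $M$.
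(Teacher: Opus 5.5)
Your proposal is correct and takes essentially the same approach as the paper's sketch. You compute the CGF in closed form, identify $K_+''(u)=\operatorname{Var}(Z\mid Z\ge b-u)$, use smoothness (Lipschitz control) of the truncated variance on a compact neighbourhood of $b$, and integrate twice from $K_+(0)=K_+'(0)=0$. Compared with the sketch, you also write down the Lipschitz constant explicitly and check that the case $u<0$ goes through.
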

\begin{proof}[Sketch of proof]
Let $K_+(u):=\log\mathbb E[e^{uY_+^{(b)}}\mid Z\ge b]$.  A direct computation of the moment generating function for a lower-truncated normal gives $K_+''(u)=\operatorname{Var}(Z\mid Z\ge b-u)$.  For $|u|\le MD^{-1}$, the cutoff $b-u$ lies in a fixed compact neighbourhood of $b$.  Since the truncated variance is smooth as a function of the cutoff, we have $K_+''(u)=\operatorname{Var}(Z\mid Z\ge b)+O_{b,M}(D^{-1})$.  The result follows by integrating twice and using $K_+(0)=K_+'(0)=0$.
\end{proof}

\noindent
With this estimate, an induction completely analogous to the red case yields an extra negative term $-\beta_{\mathrm{quad}}'(p)\,r^4/d$ in the blue clique probability, where $\beta_{\mathrm{quad}}'(p)=\frac{\gamma'(p)a^4}{8(1-p)^4}$ and $\gamma'(p)=1-\operatorname{Var}(Z\mid Z\ge -c_p)>0$.  Including both improvements would increase the final constant $\eta$ further, but since our main theorem already establishes $\eta>\epsilon_{\mathrm{HMS}}$ (a strict improvement over \cite{HMS}) and the asymptotic expression as $C\to\infty$ remains unchanged to leading order, we omit the explicit computation for the sake of brevity.

\bibliographystyle{plain}

\end{document}